\documentclass[smallextended]{svjour3} 
 
\smartqed

\usepackage{graphicx}
\usepackage[usenames,dvipsnames,table]{xcolor}
\usepackage{amsmath,amsthm,amssymb}
\usepackage{url}
\usepackage{mathtools}

\DeclarePairedDelimiter{\ceil}{\lceil}{\rceil}
\newcommand{\real}{\mathbb{R}}
\newcommand{\realnonnegative}{{\mathbb{R}}_{\ge 0}}
\newcommand{\realpositive}{\mathbb{R}_{>0}}
\newcommand{\naturalnumbers}{\mathbb{N}}
\newcommand{\norm}[1]{\ensuremath{\| #1 \|}}
\newcommand{\until}[1]{[#1]}
\newcommand{\map}[3]{#1:#2 \rightarrow #3}

\newcommand{\setdef}[2]{\{#1 \; | \; #2\}}

\newcommand{\argmin}{\operatorname{argmin}}
\newcommand{\myemphc}[1]{\emph{#1}} 
 
\newcommand{\UUhat}{\widehat{\UU}}
\newcommand{\uhat}{\widehat{u}}
\newcommand{\abs}[1]{|#1|}
\newcommand{\absb}[1]{\Bigl|#1\Bigr|}

\newcommand{\st}{\operatorname{subject \text{$\, \,$} to}}
\newcommand{\minimize}{\operatorname{minimize}}

\newcommand{\Eb}{\mathbb{E}}
\newcommand{\Pb}{\mathbb{P}}

\renewcommand{\AA}{\mathcal{A}}
\newcommand{\CC}{\mathcal{C}}
\newcommand{\DD}{\mathcal{D}}
\newcommand{\EE}{\mathcal{E}}
\newcommand{\FF}{\mathcal{F}}
\newcommand{\GG}{\mathcal{G}}
\newcommand{\HH}{\mathcal{H}}

\newcommand{\OO}{\mathcal{O}}

\newcommand{\PP}{\mathcal{P}}
\newcommand{\Scal}{\mathcal{S}}
\newcommand{\TT}{\mathcal{T}}
\newcommand{\UU}{\mathcal{U}}
\newcommand{\VV}{\mathcal{V}}
\newcommand{\WW}{\mathcal{W}}
\newcommand{\XX}{\mathcal{X}}
\newcommand{\YY}{\mathcal{Y}}


\newcommand{\CVaR}{\operatorname{CVaR}}
\newcommand{\VaR}{\operatorname{VaR}}
\newcommand{\CVaRhat}{\widehat{\CVaR}}
\newcommand{\Fhat}{\widehat{F}}
\newcommand{\Mhat}{\widehat{M}}
\newcommand{\Ghat}{\widehat{G}}
\newcommand{\ghat}{\widehat{g}}
 
\newcommand{\SSCWE}{\mathcal{S}_{\mathtt{CWE}}}
\newcommand{\SShat}{\widehat{\mathcal{S}}}
\newcommand{\SSCWEhat}{\widehat{\mathcal{S}}_{\mathtt{CWE}}}
\newcommand{\Db}{\mathbb{D}}

\newcommand{\hhat}{\widehat{h}}

\newcommand{\psihat}{\widehat{\psi}}

\newcommand{\dist}{\operatorname{dist}}
\newcommand{\eps}{\epsilon}
\newcommand{\teehat}{\widehat{t}}
\newcommand{\tee}{t}

\newcommand{\VI}{\operatorname{VI}}
\newcommand{\SOL}{\operatorname{SOL}}
\newcommand{\diam}{\operatorname{diam}}
\newcommand{\vol}{\operatorname{vol}}

\newcommand{\xtil}{\tilde{x}}

\newcommand{\tb}{\bar{t}}

\newcommand{\upp}{\mathrm{up}}
\newcommand{\down}{\mathrm{dn}}

\newcommand{\oprocendsymbol}{\hbox{$\bullet$}}
\newcommand{\oprocend}{\relax\ifmmode\else\unskip\hfill\fi\oprocendsymbol}
\newcommand{\longthmtitle}[1]{\mbox{}\textup{\textsl{(#1):}}}

\allowdisplaybreaks

\newcommand{\ifinclude}[1]{}
\renewcommand{\ifinclude}[1]{#1}


\newtheorem{assumption}[theorem]{Assumption}

\begin{document}

\title{Sample Average Approximation of Conditional Value-at-risk based Variational Inequalities}

\author{Ashish Cherukuri}

\institute{Ashish Cherukuri \at  Engineering and Technology Institute Groningen \\ University of Groningen \\ Groningen, The Netherlands \\ a.k.cherukuri@rug.nl.}

\date{Received: date / Accepted: date}

\maketitle

\begin{abstract}
  This paper focuses on a class of variational inequalities (VIs), where the map defining the VI is given by the component-wise conditional value-at-risk (CVaR) of a random function. We focus on solving the VI using sample average approximation, where solutions of the VI are estimated with solutions of a sample average VI that uses empirical estimates of the CVaRs.  We establish two properties for this scheme. First, under continuity of the random map and the uncertainty taking values in a bounded set, we prove asymptotic consistency, establishing almost sure convergence of the solution of the sample average problem to the true solution. Second, under the additional assumption of random functions being Lipschitz, we prove exponential convergence where the probability of the distance between an approximate solution and the true solution being smaller than any constant approaches unity exponentially fast. The exponential decay bound is refined for the case where random functions have a specific separable form in the decision variable and uncertainty. We adapt these results to the case of uncertain routing games and derive explicit sample guarantees for obtaining a CVaR-based Wardrop equilibria using the sample average procedure. We illustrate our theoretical findings by approximating the CVaR-based Wardrop equilibria for a modified Sioux Falls network. 
\end{abstract}
\keywords{Variational Inequalities \and Sample Average Approximation \and Conditional Value-at-Risk \and Wardrop Equilibrium }

\section{Introduction}\label{sec:intro}
Consider the following variational inequality problem $\VI(\XX,F)$: find $x^* \in \XX$ such that 
\begin{align}\label{eq:main-vi}
	(x - x^*)^\top F(x^*) \ge 0, \quad \text{ for all } x \in \XX,
\end{align}
where $\XX \subset \real^n$ is a compact set and each component $i \in \{1,2,\dots n\}$ of the map $\map{F}{\real^n}{\real^n}$, denoted $\map{F_i}{\real^n}{\real}$, is given by
\begin{align}\label{eq:Fi}
	F_i (x) = \CVaR_\alpha^\Pb [f_i(x,u)].
\end{align}
In the above equation, $\map{f_i}{\XX \times \UU}{\real}$ is referred to as the random function, the set $\UU \subset \real^m$ is compact, and $\Pb$ is the distribution of $u$ supported over the set $\UU$. We assume $f_i$ is continuous. The map $F_i$ gives the conditional value-at-risk (CVaR) at level $\alpha \in (0,1)$ of the random function $f_i$. The CVaR computes the tail expectation of the underlying random variable~\cite{AS-DD-AR:14} and can be determined by the following optimization
\begin{align}\label{eq:cvar-def-alt-f}
	\CVaR_\alpha^{\Pb} [f_i(x,u)] = \inf_{t \in \real} \, \Bigl\{ t + \frac{1}{\alpha} \Eb_\Pb [f_i(x,u) - t]_+ \Bigr\},
\end{align}
where $\Eb_\Pb$ is the expectation under the distribution $\Pb$ and the operator $[\,\cdot\,]_+$ gives the positive part, i.e., $[v]_+ = \max\{0,v\}$. The parameter $\alpha$ characterizes the risk-averseness. When $\alpha$ is close to unity, the decision-maker is risk-neutral, whereas, $\alpha$ close to the origin implies high risk-averseness. The main purpose of the paper is to analyze the statistical properties of a sample average approximation (SAA) scheme for solving the variational inequality $\VI(\XX,F)$ given in~\eqref{eq:main-vi}. The set of solutions of this problem is denoted by $\SOL(\XX,F)$.

Variational inequality problems defined using a set of random functions is surveyed in~\cite{UVS:13}. The most widely studied VI problem in this context, termed stochastic variational inequalities (SVIs), is the one where the map defining the VI is the expectation of a random function. Risk-based VIs, where the VI map is given as the risk of a random function, naturally generalize the setup of SVI and find application in finding the Wardrop equilibria in a network routing problem where users are risk-averse. While several works explore sample average schemes for SVIs, there is no such study for risk-averse VIs. This paper aims to fill  this gap.

Early investigations on statistical aspects of SAA for generalized equations and SVIs appeared in~\cite{AJK-RTR:93} and~\cite{GG-YO-SMR:99}, respectively. These works focused on asymptotic properties of the SAA schemes, that is, consistency of estimators and their asymptotic distributions. The former is concerned with showing the convergence with probability one of solutions of the SAA to solutions of the original problem as the sample size tends to infinity. The latter determines the distribution of the approximate solutions in the asymptotic limit. While these properties show the limiting behavior, they do not illustrate the guarantees in the finite-sample regime. This feature was explored in~\cite{HX:10,DR-HX:11,HX:10-an,AS-HX:08} where it was shown that for generalized equilibrium problems under various set of assumptions, one can demonstrate exponential convergence of the approximate solutions. Meaning that the probability that the SAA solution is a fixed distance away from the original solution decays exponentially as the sample size tends to infinity. Technically, establishing such a property relies on conducting sensitivity analysis for the VI and then combining it with uniform large deviation bounds on random functions. All these studies share the common property that the underlying map is the expectation of the random function, while in this paper we look at $\CVaR$-based maps.

		The works~\cite{FWM-JS-MG:10},~\cite{HS-HX-YW:14}, and~\cite{EA-HX-DZ:20} study SAA of $\CVaR$ in the context of stochastic optimization problems, where $\CVaR$ is either being minimized or used to define the constraints. In~\cite{FWM-JS-MG:10} and~\cite{HS-HX-YW:14} asymptotic consistency and exponential convergence of Karush-Kuhn-Tucker (KKT) points of the sample average optimization problem to that of the true one was established.  In~\cite{EA-HX-DZ:20}, the SAA of $\CVaR$ is used to approximate the solution of risk-constrained optimization problem. Since $\CVaR$ is used to define a VI problem in our case, the analysis does not follow directly from these existing results. Moreover, as opposed to the general large deviation bounds provided in these works, the exponential bounds derived here are explicit without involving ambiguous constants. In another data-based approach~\cite{FAR-MCC:18}, the $\CVaR$ is perceived as the expected shortfall and desirable statistical guarantees are obtained for the optimizers of its sample average.

One of the motivations for our work is to approximate the Wardrop equilibirum for a network routing problem where agents choose paths that have minimum risk. Such a setting was extensively studied in~\cite{FO-NESM:10} where various notions of equilibrium and related computational aspects of finding them were discussed. Among other works that consider risk,~\cite{EN-NESM:14} and~\cite{AAP-RS-KKS:18} assume the cost of each path to be the weighted sum of the mean and the variance of the uncertain cost. However, none of these works focus on CVaR-based routing.
In the transportation literature, the CVaR-based equilibrium is also known as the mean excess traffic equilibrium, see e.g.,~\cite{AC-ZZ:10,XX-AC-LC-CY:17} and references therein. While these works have explored numerous algorithms for computing the equilibrium, they lack theoretical performance guarantees for sample-based solutions.

 For analyzing the SAA of~\eqref{eq:main-vi}, we assume that a certain number of independent and identically distributed samples of the random variable $u$ are available using which the expectation operator in the definition of the $\CVaR$ is replaced with its sample average. The resulting empirical $\CVaR$ gives rise to a set of functions that are sample average versions of $F$. Using these, we define a sample average variational inequality. Our contributions are as follows:
\begin{enumerate}
	\item We establish asymptotic consistency of the sample average scheme, that is, the set of solutions of the sample average VI converge almost surely, in a set-valued sense, to the set $\SOL(\XX,F)$.
	\item Under the assumption that random functions are uniformly Lipschitz continuous in $x$, we show exponential convergence of the solution set of the sample average VI to the set $\SOL(\XX,F)$. That is, given any constant, the probability that the distance of a solution of the sample average problem from the set $\SOL(\XX,F)$ is less than that constant approaches unity exponentially with the number of samples. 
	\item We give tighter sample guarantees with explicit expression for the coefficient in the exponential bound for a particular class of separable random functions. 
	\item We illustrate the application of the derived approximations in computing a CVaR-based Wardrop equilibrium for a network routing problem that is defined using uncertain costs. 
\end{enumerate}

A preliminary version of the paper appeared as~\cite{AC:19-cdc}, where the focus was finding the Wardrop equilibrium problem for a network routing problem. As compared to it, the present article has a more general problem setup focusing not just on a Wardrop equilibrium problem, but on a general VI. In addition, the tighter sample guarantees for separable functions given in Section~\ref{sec:seperable} are new here and the simulation example is much more elaborate. 

\noindent
\textbf{Notation:} Let $\real$, $\realnonnegative$, $\realpositive$, and $\naturalnumbers$ denote the set of real, nonnegative real, positive real, and natural numbers, respectively. Let $\norm{\cdot}$ denote the Euclidean $2$-norm. We use $\until{N}:=\{1, \dots, N\}$ for positive integer $N$. For $x \in \real$, we let $[x]_+ = \max(x,0)$ and $\ceil{x}$ be the smallest integer greater than or equal to $x$. The cardinality of a set $\Scal$ is denoted by $\abs{\Scal}$. The distance of a point $x \in \real^m$ to a set $\Scal \subset \real^m$ is denoted by $\dist(x,\Scal) := \inf_{y \in \Scal} \norm{x-y}$. The \myemphc{deviation} of a set $\AA \subset \real^m$ from $\Scal$ is $\Db(\AA,\Scal):= \sup_{y \in \AA} \dist(y,\Scal)$.

\section{Preliminaries}\label{sec:prelims}

Here we collect relevant mathematical background used throughout the paper.

\subsection{Variational Inequality} 
Given a map $\map{F}{\real^n}{\real^n}$ and a closed set $\XX \subset \real^n$, the \myemphc{variational inequality} (VI) problem, denoted $\VI(\XX,F)$, involves finding $x^* \in \XX$ such that $(x-x^*)^\top F(x^*) \ge 0$ for all $x \in \XX$. Such a point is called a \myemphc{solution} of the VI problem. The set of solutions of $\VI(\XX,F)$ are denoted by $\SOL(\XX,F)$. The map $F$ is \myemphc{monotone} on the set $\XX$ if $(F(x) - F(x'))^\top (x - x') \ge 0$ for all $x, x' \in \XX$. The map $F$ is \emph{strictly monotone} on $\XX$ if this inequality is strict for $x \not = x'$. Finally, $F$ is \myemphc{strongly monotone} on $\XX$ with modulus $\sigma > 0$ if $(F(x) - F(x'))^\top (x - x') \ge \sigma \norm{x - x'}^2$ for all $x, x' \in \XX$. If $F$ is either strictly or strongly monotone, then $\SOL(\XX,F)$ is singleton.

\subsection{Uniform Convergence}
A sequence of functions $\{\map{f_N}{\XX}{\YY}\}_{N=1}^\infty$, where $\XX$ and $\YY$ are Euclidean spaces, is said to \myemphc{converge uniformly} on a set $X \subset \XX$ to $\map{f}{\XX}{\YY}$ if for any $\eps > 0$, there exists $N_\eps \in \naturalnumbers$ such that
\begin{align*}
	\sup_{x \in X} \norm{f_N(x) - f(x)} \le \eps, \, \text{ for all } \, N \ge N_\eps.
\end{align*}
Similar definition applies for convergence in probability. That is, consider a random sequence of functions $\{\map{f_N^\omega}{\XX}{\YY}\}_{N=1}^\infty$ defined on a probability space $(\Omega, \FF, P)$. The sequence is said to \myemphc{converge uniformly} to $\map{f}{\XX}{\YY}$ on $X$ \myemphc{almost surely} (shorthand, a.s.) if $f_N^\omega \to f$ uniformly on $X$ for almost all $\omega \in \Omega$.

\subsection{Risk Measures}
Next we review notions on value-at-risk and CVaR from~\cite{AS-DD-AR:14}. Given a real-valued random variable $Z$ with probability distribution $\Pb$, we denote the \myemphc{cumulative distribution} function by $H_Z(\zeta):=\Pb(Z \le \zeta)$. The \myemphc{left-side $\alpha$-quantile} of $Z$ is defined as $H_Z^{-1}(\alpha) := \inf \setdef{\zeta}{H_Z(\zeta) \ge \alpha}$.  Given a probability level $\alpha \in (0,1)$, the \myemphc{value-at-risk} of $Z$ at level $\alpha$, denoted $\VaR_\alpha^{\Pb}[Z]$, is the left-side $(1-\alpha)$-quantile of $Z$. Formally,
\begin{align*}
	\VaR_\alpha^{\Pb}[Z]  := H_Z^{-1}(1-\alpha)
	& = \inf \setdef{\zeta}{\Pb(Z \le \zeta) \ge 1-\alpha}
	\\
	& = \inf \setdef{\zeta}{\Pb(Z > \zeta) \le \alpha}.
\end{align*}
The \myemphc{CVaR}, also referred to as the average value-at-risk in~\cite{AS-DD-AR:14}, of $Z$ at level $\alpha$, denoted $\CVaR_\alpha^{\Pb}[Z]$, is given as
\begin{align}\label{eq:cvar-def-alt}
	\CVaR_\alpha^{\Pb} [Z] = \inf_{t \in \real} \Bigl\{ t + \frac{1}{\alpha} \Eb[Z - t]_+ \Bigr\}.
\end{align}
Under the continuity of the cumulative distribution function at $\VaR_\alpha^{\Pb}[Z]$, we have that $\CVaR^{\Pb}_\alpha[Z]$ is the expectation of $Z$ when it takes values bigger than $\VaR_\alpha^{\Pb}[Z]$. That is, $\CVaR_\alpha^{\Pb} [Z] := \Eb[Z \ge \VaR_\alpha^{\Pb}[Z]]$. 

The parameter $\alpha$ characterizes the risk-averseness. When $\alpha$ is close to unity, the decision-maker is risk-neutral, whereas, $\alpha$ close to the origin implies high risk-averseness. The minimum in~\eqref{eq:cvar-def-alt} is attained at a point in the interval $[t^m,t^M]$, where $t^m : = \inf \setdef{\zeta}{H_Z(\zeta) \ge 1-\alpha}$, and $t^M := \sup \setdef{\zeta}{H_Z(\zeta) \le 1-\alpha}$.

\section{Sample Average Approximation of $\VI(\XX,F)$}

The approach in the sample average framework is to replace the expectation operator in any problem with the average over the obtained samples~\cite{AS-DD-AR:14}. This is one of the main Monte Carlo methods for problems with expectations; see~\cite{THM-GB:14} for a detailed survey of other sample-based techniques. In our setup, for each component $F_i$, we will replace the expectation operator in the definition of the $\CVaR$ in~\eqref{eq:cvar-def-alt-f} with the sample average. The thus formed set of functions result in a $\VI$ problem that approximates $\VI(\XX,F)$.

Note that the map $F$ is continuous since $f_i$, $i \in \until{n}$ are so and $\XX$ and $\UU$ are compact. One can reason this fact using arguments similar to those of the proof of Lemma~\ref{le:CVaR-Lip}. As a consequence of the continuity of $F$, the set of solutions $\SOL(\XX,F)$ of the problem $\VI(\XX,F)$ is nonempty and compact~\cite[Corollary 2.2.5]{FF-JSP:03}. For convenience, we use the notation $\Scal = \SOL(\XX,F)$. 

Let $\UUhat^N := \{\uhat^1, \uhat^2, \dots, \uhat^N\}$ be the set of $N \in \naturalnumbers$ independent and identically distributed samples of $u$ drawn from $\Pb$. Then, the sample average approximation of the $\CVaR$ associated to component $i \in \until{n}$ is
\begin{equation}\label{eq:cvar-N-cp}
	\CVaRhat^N_\alpha [f_i(x,u)] := \inf_{t \in \real} \Bigl\{t + \frac{1}{N \alpha} \sum_{j=1}^N [f_i (x,\uhat^j) - t]_+ \Bigr\}.
\end{equation}
The above expression is also known as the empirical estimate of the $\CVaR$, or empirical $\CVaR$ in short. The expression is also the $\CVaR$ of the random function at level $\alpha$ under the empirical distribution $\frac{1}{N} \sum_{j=1}^N \delta_{\uhat^j}$, where $\delta_{\uhat^j}$ is the unit point mass at $\uhat^j$. Note that the operator $\CVaRhat^N_\alpha$ is random as it depends on the realization $\UUhat^N$ of the random variable. To emphasize this dependency, we represent with $\widehat{\, \cdot \,}^N$ entities that are random. Using~\eqref{eq:cvar-N-cp} as the approximate function, define the sample average VI problem as $\VI(\XX,\Fhat^N)$, where
\begin{equation*}
	\Fhat^N_i (x) := \CVaRhat^N_\alpha [f_i(x,u)],
\end{equation*}
for all $i \in [n]$. We denote the set of solutions of $\VI(\XX,\Fhat^N)$ by $\SShat^N \subset \XX$.  This serves as a reminder that it approximates $\Scal$. The notion of approximation is made precise next. Note that $\SShat^N$ is nonempty as $\XX$ is compact and $\Fhat^N$ is continuous.

\newcommand{\xhat}{\widehat{x}}
\begin{definition}\longthmtitle{Asymptotic consistency and exponential convergence}
	The set $\SShat^N$ is an asymptotically consistent approximation of $\Scal$, or in short, $\SShat^N$ is asymptotically consistent, if any sequence of solutions
	$\{\xhat^N \in \SShat^N\}_{N=1}^\infty$ has almost surely (a.s.) all accumulation points in $\Scal$.  The set $\SShat^N$ is said to converge exponentially to $\Scal$ if for any $\eps > 0$, there exist positive constants $c_\eps$ and $\delta_\eps$ such that for any sequence $\{ \xhat^N \in \SShat^N\}_{N=1}^\infty$, the following holds
	\begin{align}\label{eq:exp-bound-conv}
		\Pb^N \Bigl(\dist(\xhat^N, \Scal) \le \eps \Bigr) \ge 1-c_\eps e^{-\delta_\eps N}
	\end{align}
	for all $N \in \naturalnumbers$.  \oprocend
\end{definition}
The asymptotic consistency of $\SShat^N$ is equivalent to saying $\Db(\SShat^N,\Scal) \to 0$ a.s. as $N \to \infty$. The expression~\eqref{eq:exp-bound-conv} gives a precise rate for this convergence. In our work, all convergence results are for $N \to \infty$ and so we drop restating this fact for convenience's sake. In the following sections, we will establish the asymptotic consistency and the exponential convergence of $\SShat^N$ under suitable assumptions.

\subsection{Asymptotic Consistency of $\SShat^N$}\label{subsec:consistency}
We begin with stating the bound on the optimizers of the problem defining the $\CVaR$~\eqref{eq:cvar-def-alt-f} and the empirical $\CVaR$~\eqref{eq:cvar-N-cp}. This restricts our attention to compact domains for variables $(x,t,u)$, a property useful in showing consistency. Denote for each $i \in \until{n}$, functions
\begin{subequations}\label{eq:psi-maps}
	\begin{align}
		\psi_i(x,t) & := t + \frac{1}{\alpha} \Eb_\Pb [f_i(x,u)-t]_+,
		\\
		\psihat^N_i(x,t) & := t + \frac{1}{N \alpha}  \sum_{j=1}^N [f_i(x,\uhat^j) - t]_+.
	\end{align}
\end{subequations}
The map $\psihat^N_i$ is the sample average of $\psi_i$. Given our assumption that $\UU$ is compact, we have that the expected value of $f_i$ is bounded for any $x \in \XX$. Using this fact, one can deduce by strong law of large numbers~\cite{RD:10} that for any fixed $(x,t) \in \XX \times \real$, $\psihat^N_i(x,t) \to \psi_i(x,t)$ a.s. We however require uniform convergence of these maps to conclude consistency, which will be established in Theorem~\ref{th:asymptotic} below. Observe that, by definition, $\CVaR^\Pb_\alpha [f_i(x,u)] = \inf_{t \in \real} \psi_i(x,t)$ and $\CVaRhat^N_\alpha[f_i(x,u)] = \inf_{t \in \real} \psihat^N_i(x,t)$. The following result gives explicit bounds on the optimizers of these problems.
\begin{lemma}\longthmtitle{Bounds on optimizers of problems defining (empirical) $\CVaR$}\label{le:cvar-opt-compact}
	For any $x \in \XX$ and $i \in \until{n}$, the optimizers of the problems in~\eqref{eq:cvar-def-alt-f} and~\eqref{eq:cvar-N-cp} exist and belong to the compact set $\TT = [\ell,L]$, where
	\begin{align*}
		\ell & := \min \setdef{f_i(x,u)}{x \in \XX, u \in \UU, i \in \until{n}},
		\\
		L & := \max \setdef{f_i(x,u)}{x \in \XX, u \in \UU, i \in \until{n}}.
	\end{align*}
	Furthermore, the set of functions
	\begin{align}\label{eq:phi-def}
		\phi_i (x,t,u) := t + \frac{1}{\alpha} [f_i(x,u)-t]_{+},
	\end{align}
	for $i \in \until{n}$, satisfy for all $(x,t,u) \in \XX \times \TT \times \UU$,
	\begin{align}\label{eq:phi-bound}
		\phi_i(x,t,u) \in \Bigl[ \ell, \ell + \frac{L-\ell}{\alpha} \Bigr].
	\end{align}
\end{lemma}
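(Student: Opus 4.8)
The plan is to reduce both claims to elementary one-dimensional arguments, exploiting throughout that $\alpha \in (0,1)$ forces $1 - \tfrac{1}{\alpha} < 0$. First I would record that $\ell$ and $L$ are finite and attained: $f_i$ is continuous, $\XX \times \UU$ is compact, and there are finitely many indices $i$, so the minimum and maximum defining $\ell$ and $L$ exist. In particular $f_i(x,u) \in [\ell,L]$ for every admissible $(x,u)$, and since each sample $\uhat^j$ lies in $\UU$, also $f_i(x,\uhat^j) \in [\ell,L]$. This is the only place where compactness of $\UU$ enters.

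For the location of the optimizers I would view $\psi_i(x,\cdot)$ and $\psihat^N_i(x,\cdot)$ (for fixed $x$) as convex functions of $t$, whose infima equal $\CVaR_\alpha^\Pb[f_i(x,u)]$ and $\CVaRhat^N_\alpha[f_i(x,u)]$ respectively. For $t < \ell$ every realization satisfies $f_i(x,u) > t$, so the positive part is active and $\psi_i(x,t) = t + \tfrac{1}{\alpha}(\Eb_\Pb[f_i(x,u)] - t)$, an affine function of $t$ with slope $1 - \tfrac{1}{\alpha} < 0$, hence strictly decreasing. For $t > L$ the positive part vanishes identically, leaving $\psi_i(x,t) = t$, strictly increasing. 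Therefore no minimizer can lie outside $[\ell,L]$, and since $\psi_i(x,\cdot)$ is continuous and convex it attains its infimum on the compact interval $[\ell,L] = \TT$, which by the monotonicity just shown is the global minimum. The argument for $\psihat^N_i(x,\cdot)$ is word-for-word identical after replacing $\Eb_\Pb$ by the empirical average, using $f_i(x,\uhat^j) \in [\ell,L]$.

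For the bound on $\phi_i$ the lower bound is immediate: $[f_i(x,u)-t]_+ \ge 0$ and $t \ge \ell$ on $\TT$ give $\phi_i(x,t,u) \ge t \ge \ell$. For the upper bound I would use monotonicity in the two free quantities. Since $\phi_i$ is non-decreasing in $f_i(x,u)$ through the positive part, its maximum over $f_i(x,u) \in [\ell,L]$ occurs at $f_i(x,u) = L$, giving $t + \tfrac{1}{\alpha}[L-t]_+ = t + \tfrac{1}{\alpha}(L - t)$ because $t \le L$ on $\TT$. This expression is affine in $t$ with slope $1 - \tfrac{1}{\alpha} < 0$, so it is maximized at $t = \ell$, yielding exactly $\ell + \tfrac{L-\ell}{\alpha}$. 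Hence $\phi_i(x,t,u) \in [\ell, \ell + \tfrac{L-\ell}{\alpha}]$ on $\XX \times \TT \times \UU$.

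The computations are routine; the only point requiring care — and the single recurring idea — is the sign of $1 - \tfrac{1}{\alpha}$, which is what makes the objective strictly decreasing to the left of $\ell$ and is the reason the worst case for $\phi_i$ sits at the left endpoint $t = \ell$ rather than at $t = L$. Everything else reduces to convexity of the positive part and the trivial containment $f_i(x,u), f_i(x,\uhat^j) \in [\ell,L]$.
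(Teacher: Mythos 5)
Your proof is correct. The second half (the bound on $\phi_i$) is essentially the paper's own computation: replace $f_i(x,u)$ by $L$ inside the positive part, observe the resulting affine function of $t$ has slope $1-\tfrac{1}{\alpha}<0$ so the worst case is $t=\ell$, and get the lower bound from $[\cdot]_+\ge 0$ and $t\ge\ell$. Where you genuinely diverge is the first half. The paper disposes of the existence and location of the optimizers by citing the standard characterization from Shapiro et al.\ that the minimizers of the $\CVaR$ problem form the interval between the left- and right-side $(1-\alpha)$-quantiles of the random variable, which is automatically contained in the range $[\ell,L]$ of its values. You instead prove this from scratch: for $t<\ell$ the positive part is active for every realization, so $\psi_i(x,\cdot)$ and $\psihat^N_i(x,\cdot)$ are affine with negative slope $1-\tfrac{1}{\alpha}$, hence strictly decreasing; for $t>L$ the positive part vanishes and the objective is just $t$, strictly increasing; so all minimizers lie in $[\ell,L]$ and the infimum is attained there by continuity on a compact interval. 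This is a clean, self-contained, and slightly stronger presentation (it does not require the reader to unpack the quantile characterization, and it treats the true and empirical problems in one stroke), at the cost of a few extra lines. Both routes are valid; the single organizing observation in each is the sign of $1-\tfrac{1}{\alpha}$.
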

\ifinclude{
	\begin{proof}
		From~\cite[Chapter 6]{AS-DD-AR:14}, optimizers of~\eqref{eq:cvar-def-alt-f} and~\eqref{eq:cvar-N-cp} exist and they lie in the closed interval defined by the left- and the right-side $(1-\alpha)$-quantile  of the respective random variables. Since this interval belongs to the set of values the functions take, we conclude that the optimizers belong to $\TT$. To conclude~\eqref{eq:phi-bound}, note that
		\begin{align*}
			\phi_i (x,t,u) & = t + \frac{1}{\alpha} [f_i(x,u) -t]_+ \le t + \frac{1}{\alpha} [L-t]_+
			\\
			& = t + \frac{1}{\alpha} (L-t) = (1-\frac{1}{\alpha}) t + \frac{1}{\alpha} L
			\\
			& \le (1-\frac{1}{\alpha}) \ell + \frac{1}{\alpha} L.
		\end{align*}
		Here, the first inequality follows from the bound on $f_i$, the first equality is because $t \in [\ell,L]$, and the second inequality is due to the fact that $\alpha < 1$. Similarly, for the lower bound,
		\begin{align*}
			\phi_i(x,t,u) & \ge t + \frac{1}{\alpha}[\ell-t]_+ = t \ge \ell.
		\end{align*}
		This completes the proof.
	\end{proof}
}

We make a note here that optimizers of problems defining the $\CVaR$ in~\eqref{eq:cvar-def-alt-f} and~\eqref{eq:cvar-N-cp} exist and are bounded for more general cases, even when the support of the random variable is unbounded, see e.g.,~\cite[Chapter 6]{AS-DD-AR:14}. Nevertheless, the above result provides an explicit bound which is used later in deriving precise exponential convergence guarantees.

As a consequence of Lemma~\ref{le:cvar-opt-compact}, one can show uniform convergence of $\psihat^N_p$ to $\psi_p$. Our next step is to analyze the sensitivity of $F$ as one perturbs the underlying map $\psi$. In combination with the uniform convergence of $\psihat^N_p$, this result leads to the uniform convergence of $\Fhat^N$ to $F$.

\begin{lemma}\longthmtitle{Sensitivity of $F$ with respect to $\psi$}\label{le:sensitivity-F}
	For any $\eps > 0$, if $\sup_{i \in \until{n}, (x,t) \in \XX \times \TT} \abs{\psihat^N_i (x,t) - \psi_i(x,t)} \le \eps$, where $\TT$ is defined in Lemma~\ref{le:cvar-opt-compact}, then
	\begin{align*}
		\sup_{x \in \XX} \norm{\Fhat^N(x) - F(x)} \le \sqrt{n} \eps. 
	\end{align*}
\end{lemma}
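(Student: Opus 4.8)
The plan is to bound the error component by component and then aggregate across the $n$ components using the Euclidean norm. The workhorse will be the elementary fact that taking infima is non-expansive: for any two real-valued functions $g$ and $h$ on a common domain $T$, one has $\abs{\inf_{t \in T} g(t) - \inf_{t \in T} h(t)} \le \sup_{t \in T} \abs{g(t) - h(t)}$. Applied with $g = \psihat^N_i(x,\cdot)$ and $h = \psi_i(x,\cdot)$, this would immediately give $\abs{\Fhat^N_i(x) - F_i(x)} \le \sup_t \abs{\psihat^N_i(x,t) - \psi_i(x,t)}$ for each fixed $x \in \XX$ and $i \in \until{n}$.

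First I would fix $x \in \XX$ and $i \in \until{n}$ and rewrite both $F_i(x) = \inf_{t \in \real} \psi_i(x,t)$ and $\Fhat^N_i(x) = \inf_{t \in \real} \psihat^N_i(x,t)$ as infima over the compact interval $\TT$ rather than over all of $\real$. This reduction is precisely where Lemma~\ref{le:cvar-opt-compact} enters: it guarantees that the minimizers of both the true and the empirical problems lie in $\TT$, so restricting the domain changes neither infimum. This step is essential because the hypothesis only controls $\abs{\psihat^N_i - \psi_i}$ on $\XX \times \TT$, not on all of $\XX \times \real$; without the containment of optimizers in $\TT$, the uniform-closeness assumption would not be usable.

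With both infima now taken over $\TT$, applying the non-expansiveness of the infimum and then the hypothesis yields $\abs{\Fhat^N_i(x) - F_i(x)} \le \sup_{t \in \TT} \abs{\psihat^N_i(x,t) - \psi_i(x,t)} \le \eps$, uniformly in $x \in \XX$ and $i \in \until{n}$. Finally I would assemble the components and take the supremum over $x \in \XX$:
\begin{align*}
	\norm{\Fhat^N(x) - F(x)} = \Bigl( \sum_{i=1}^n \abs{\Fhat^N_i(x) - F_i(x)}^2 \Bigr)^{1/2} \le \Bigl( \sum_{i=1}^n \eps^2 \Bigr)^{1/2} = \sqrt{n}\, \eps.
\end{align*}
The argument is short, and the only genuine subtlety — rather than a real obstacle — is the domain-restriction step: one must invoke the earlier bound on the optimizers to legitimately convert the infima over $\real$ into infima over $\TT$, where the hypothesis applies. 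The remaining ingredients (non-expansiveness of the infimum and the norm aggregation) are routine.
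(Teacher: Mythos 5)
Your proof is correct and follows essentially the same route as the paper's: the paper's two chains of inequalities involving the optimizers $\teehat^N_i(x)$ and $\tee_i(x)$ are exactly an unpacked form of your ``non-expansiveness of the infimum'' step, with Lemma~\ref{le:cvar-opt-compact} playing the same role of placing the optimizers in $\TT$ so the hypothesis applies, and the $\sqrt{n}$ aggregation is identical. No gaps.
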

	\begin{proof}
		The first step is to show the sensitivity of the map $\CVaR^\Pb_\alpha [f_i(\cdot,u)]$ with respect to $\psi_p$. To this end, fix $i \in \until{n}$ and $x \in \XX$, and let
		\begin{align*}
			\teehat^N_i(x) \in \underset{t \in \real}{\argmin} \, \psihat^N_i(x,t)
			\, \text{ and } \,
			\tee_i(x) \in \underset{t \in \real}{\argmin} \, \psi_i(x,t).
		\end{align*}
		These optimizers exist due to Lemma~\ref{le:cvar-opt-compact}. We now have
		\begin{align*}
			\psi_i \Bigl(x, \tee_i(x)\Bigr) - \eps \le \psi_i \Bigl(x, \teehat^N_i(x) \Bigr) -\eps \le \psihat^N_i \Bigl(x, \teehat^N_i(x)\Bigr). 
		\end{align*}
		The first inequality is due to optimality and the second inequality holds by assumption. Similarly, one can show that
		\begin{align*}
			\psihat^N_i\Bigl(x, \teehat^N_i(x)\Bigr) -\eps \le \psi_i\Bigl(x, \tee_i(x)\Bigr).
		\end{align*}
		The above two sets of inequalities along with the fact that $\CVaRhat^N_\alpha[f_i(x,u)] = \psihat^N_i \Bigl(x, \teehat^N_i(x)\Bigr)$ and $\CVaR^\Pb_\alpha[f_i(x,u)] = \psi_i \Bigl(x, \tee_i(x)\Bigr)$ lead to the conclusion
		\begin{align}\label{eq:cvar-sup-bound}
			\sup_{x \in \XX} \Big| \CVaRhat^N_\alpha[f_i(x,u)] - \CVaR^\Pb_\alpha [f_i(x,u)] \Big| \le \eps.
		\end{align}
		Finally, the conclusion follows from the inequality $\norm{\Fhat^N(x) - F(x)} \le \sqrt{n} \sup_{i \in \until{n}} \abs{\Fhat^N_i(x) - F_i(x)}$ that holds for all $x \in \XX$.
	\end{proof}

The final preliminary result states proximity of $\SShat^N$ to $\Scal$ given that the difference between $\Fhat^N$ and $F$ is bounded. The proof is a consequence of~\cite[Lemma 2.1]{HX:10} that studies sensitivity of generalized equations.
\begin{lemma}\longthmtitle{Sensitivity of $\Scal$ with respect to $F$}\label{le:cont-sol-set}
	For any $\eps > 0$, there exists $\delta(\eps) > 0$ such that $\Db(\SShat^N, \Scal) \le \eps$ whenever $\sup_{x \in \XX} \norm{\Fhat^N(x) - F(x)} \le \delta(\epsilon)$.
\end{lemma}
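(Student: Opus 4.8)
The plan is to establish this as an upper–semicontinuity (stability) property of the solution map $G \mapsto \SOL(\XX,G)$ at $G = F$, measured in the uniform norm over the compact set $\XX$. This is exactly the specialization to our setting of the general sensitivity result for generalized equations in~\cite[Lemma 2.1]{HX:10}, obtained by writing $\VI(\XX,F)$ as the generalized equation $0 \in F(x) + \NN_\XX(x)$, where $\NN_\XX(x)$ is the normal cone to $\XX$ at $x$, and viewing $\VI(\XX,\Fhat^N)$ as perturbing the single-valued part $F$ by $\Fhat^N - F$. Rather than invoke the abstract statement, I would give a direct self-contained argument by contradiction that uses only compactness of $\XX$, continuity of $F$, and nonemptiness of $\Scal$ (the latter guaranteed earlier, since $F$ is continuous and $\XX$ is compact).

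Suppose the claim fails for some $\eps > 0$. The negation of the statement is that for every $\delta > 0$ there is a continuous map with uniform error at most $\delta$ whose solution set deviates from $\Scal$ by more than $\eps$. Taking $\delta = 1/k$ produces, for each $k \in \naturalnumbers$, a continuous map $G_k$ with $\sup_{x \in \XX}\norm{G_k(x)-F(x)} \le 1/k$ yet $\Db(\SOL(\XX,G_k),\Scal) > \eps$; here $G_k$ plays the role of $\Fhat^N$. By definition of the deviation $\Db$, I pick $x_k \in \SOL(\XX,G_k)$ with $\dist(x_k,\Scal) > \eps$, each satisfying $(x-x_k)^\top G_k(x_k) \ge 0$ for all $x \in \XX$. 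Since $\XX$ is compact, after passing to a subsequence (not relabeled) I may assume $x_k \to \bar x$ for some $\bar x \in \XX$.

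The decisive step is to pass to the limit in the inequality. From $\norm{G_k(x_k)-F(\bar x)} \le \norm{G_k(x_k)-F(x_k)} + \norm{F(x_k)-F(\bar x)} \le 1/k + \norm{F(x_k)-F(\bar x)}$, continuity of $F$ together with $x_k \to \bar x$ gives $G_k(x_k) \to F(\bar x)$. Fixing any $x \in \XX$ and letting $k \to \infty$ in $(x-x_k)^\top G_k(x_k) \ge 0$ yields $(x-\bar x)^\top F(\bar x) \ge 0$; as $x \in \XX$ was arbitrary, $\bar x \in \SOL(\XX,F)=\Scal$, so $\dist(\bar x,\Scal)=0$. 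But $\dist(\cdot,\Scal)$ is $1$-Lipschitz, hence continuous, and $\dist(x_k,\Scal) > \eps$ forces $\dist(\bar x,\Scal)\ge \eps>0$, a contradiction. This rules out the failure of the statement, so the required $\delta(\eps)>0$ exists.

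The main obstacle I anticipate is precisely the joint limit $G_k(x_k)\to F(\bar x)$: because the evaluation point $x_k$ moves together with the perturbed map $G_k$, mere pointwise convergence of $G_k$ to $F$ would not suffice, and it is the \emph{uniform} bound $\sup_{x}\norm{G_k(x)-F(x)}\le 1/k$ (which in the application is supplied by Lemma~\ref{le:sensitivity-F}) combined with continuity of $F$ that legitimizes the passage to the limit. A secondary point to handle with care is the logical quantification: the negation of ``there exists $\delta(\eps)>0$ such that the implication holds'' must be taken as the existence of a sequence of perturbations with vanishing uniform error but persistent deviation larger than $\eps$, which is exactly what enables the subsequence extraction above.
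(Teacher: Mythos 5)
Your argument is correct, but it takes a different route from the paper: the paper does not prove this lemma at all, instead deferring entirely to the general sensitivity result for generalized equations in~\cite[Lemma 2.1]{HX:10} (after the standard identification of $\VI(\XX,F)$ with $0 \in F(x) + \NN_\XX(x)$). What you supply is a self-contained compactness-and-contradiction proof of the stronger, purely deterministic statement that the solution map $G \mapsto \SOL(\XX,G)$ is outer semicontinuous at $F$ in the uniform norm on $\XX$; this immediately implies the lemma with a $\delta(\eps)$ that is uniform in $N$ and in the sample realization, which is exactly what Theorems~\ref{th:asymptotic} and~\ref{th:exp-conv} need. Each step checks out: the negation correctly yields maps $G_k$ with vanishing uniform error but $\Db(\SOL(\XX,G_k),\Scal) > \eps$ (note $\Db > \eps$ automatically forces $\SOL(\XX,G_k) \neq \emptyset$, so the points $x_k$ exist); the joint limit $G_k(x_k) \to F(\bar x)$ is legitimately obtained from the uniform bound plus continuity of $F$, and you rightly flag that pointwise convergence would not suffice here; and the $1$-Lipschitz continuity of $\dist(\cdot,\Scal)$ closes the contradiction. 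What your approach buys is transparency and minimal hypotheses (only compactness of $\XX$ and continuity of $F$, both already established in the paper); what the paper's citation buys is brevity and applicability to more general generalized equations where $\XX$ need not be compact or the perturbation structure is more involved. The only cosmetic caveat is that your proof establishes the claim for arbitrary continuous perturbations $G$ of $F$, of which $\Fhat^N$ is a particular instance — worth one sentence to make that specialization explicit.
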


Next is the main result of this section, establishing the asymptotic consistency of $\SShat^N$. The proof puts to use the preliminary lemmas on sensitivity presented above along with the uniform convergence of $\psihat^N_p$ to $\psi_p$.

\begin{theorem}\longthmtitle{Asymptotic consistency of $\SShat^N$}\label{th:asymptotic}
	We have $\Db(\SShat^N,\Scal) \to 0$ almost surely.
\end{theorem}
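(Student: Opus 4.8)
The plan is to chain the three sensitivity lemmas, reducing everything to a single uniform law-of-large-numbers statement for the maps $\psihat^N_i$. Concretely, Lemma~\ref{le:cont-sol-set} controls $\Db(\SShat^N,\Scal)$ in terms of $\sup_{x\in\XX}\norm{\Fhat^N(x)-F(x)}$, Lemma~\ref{le:sensitivity-F} in turn controls this quantity (up to a factor $\sqrt{n}$) by $\sup_{i\in\until{n},\,(x,t)\in\XX\times\TT}\abs{\psihat^N_i(x,t)-\psi_i(x,t)}$, and $\TT$ from Lemma~\ref{le:cvar-opt-compact} is exactly the compact set on which the relevant optimizers live. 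So the whole theorem follows once I show that this last supremum converges to $0$ almost surely.

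The core step, and the one I expect to be the main obstacle, is establishing the \emph{uniform} strong law of large numbers $\sup_{(x,t)\in\XX\times\TT}\abs{\psihat^N_i(x,t)-\psi_i(x,t)}\to 0$ a.s.\ for each fixed $i$. Writing $\psi_i(x,t)=\Eb_\Pb[\phi_i(x,t,u)]$ and $\psihat^N_i(x,t)=\frac1N\sum_{j=1}^N\phi_i(x,t,\uhat^j)$ with $\phi_i$ from~\eqref{eq:phi-def}, the text already records pointwise a.s.\ convergence via the ordinary SLLN. Upgrading this to uniform convergence is where the real work lies, since the exceptional null set must be made common to all $(x,t)$. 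I would exploit that $\phi_i$ is continuous on the compact product $\XX\times\TT\times\UU$, hence uniformly continuous there: for every $\eta>0$ there is $\rho>0$ with $\abs{\phi_i(x,t,u)-\phi_i(x',t',u)}<\eta$ whenever $\norm{(x,t)-(x',t')}<\rho$, uniformly in $u$. Taking expectations and averages preserves this modulus, so the families $\{\psihat^N_i\}_N$ and $\{\psi_i\}$ are uniformly equicontinuous on $\XX\times\TT$.

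With equicontinuity in hand I would run the standard dense-set argument. Fix a countable dense subset $D\subset\XX\times\TT$; the SLLN gives a.s.\ convergence at each point of $D$, and since $D$ is countable the intersection of these countably many full-measure events is itself of full measure, so on a single event convergence holds simultaneously at every point of $D$. Covering $\XX\times\TT$ by finitely many $\rho$-balls centred at points of $D$, inserting the centre $(x_k,t_k)$ nearest to $(x,t)$ into the triangle inequality
\begin{align*}
\abs{\psihat^N_i(x,t)-\psi_i(x,t)}
&\le \abs{\psihat^N_i(x,t)-\psihat^N_i(x_k,t_k)}
+\abs{\psihat^N_i(x_k,t_k)-\psi_i(x_k,t_k)}\\
&\quad+\abs{\psi_i(x_k,t_k)-\psi_i(x,t)}
\end{align*}
bounds the first and third terms by $\eta$ through equicontinuity, and the middle term (over the finite cover) through the pointwise convergence. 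This yields the uniform a.s.\ convergence for each $i$, and taking the maximum over the finitely many $i\in\until{n}$ preserves it.

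Finally I would assemble the pieces. Given $\eps>0$, take $\delta(\eps)>0$ from Lemma~\ref{le:cont-sol-set}; on the full-measure event above there is $N_\eps$ such that for all $N\ge N_\eps$ the $\psi$-supremum lies below $\delta(\eps)/\sqrt{n}$, whence Lemma~\ref{le:sensitivity-F} gives $\sup_{x\in\XX}\norm{\Fhat^N(x)-F(x)}\le\delta(\eps)$ and Lemma~\ref{le:cont-sol-set} gives $\Db(\SShat^N,\Scal)\le\eps$. Since $\eps>0$ is arbitrary, $\Db(\SShat^N,\Scal)\to 0$ a.s. An alternative to the hand-built equicontinuity argument would be to invoke a ready-made uniform law of large numbers for Carath\'eodory integrands on compact sets (e.g.\ from~\cite{AS-DD-AR:14}), but the equicontinuity route keeps the proof self-contained.
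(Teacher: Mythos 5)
Your proposal is correct and follows essentially the same route as the paper: reduce $\Db(\SShat^N,\Scal)\to 0$ to uniform a.s.\ convergence of $\psihat^N_i$ to $\psi_i$ on the compact set $\XX\times\TT$, then chain Lemma~\ref{le:sensitivity-F} and Lemma~\ref{le:cont-sol-set}. The only difference is that you prove the uniform law of large numbers by hand via equicontinuity and a dense-subset/finite-cover argument, whereas the paper obtains it directly by invoking~\cite[Theorem 7.48]{AS-DD-AR:14} after checking continuity of $\phi_i(\cdot,\cdot,u)$ and the domination bound from Lemma~\ref{le:cvar-opt-compact} --- precisely the alternative you mention at the end.
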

	\begin{proof}
		Consider first the a.s. uniform convergence $\psihat^N_i \to \psi_i$ over the compact set $\XX \times \TT$. 
		Note that $\psi_i(x,t) = \Eb_\Pb [\phi_i(x,t,u)]$ where $\phi_i$ is given in~\eqref{eq:phi-def} and so, $\psihat^N_i$ is the sample average of $\psi_i$. For any fixed $u \in \UU$, the map $\phi_i(\cdot, \cdot, u)$ is continuous and for any $(x,t) \in \XX \times \TT$, due to Lemma~\ref{le:cvar-opt-compact}, the map $\phi_i(x,t,\cdot)$ is dominated by the integrable function (a constant in this case) $\ell + \frac{L-\ell}{\alpha}$. Hence, by the uniform law of large numbers result~\cite[Theorem 7.48]{AS-DD-AR:14}, we conclude that $\psihat^N_i \to \psi_i$ uniformly a.s. on $\XX \times \TT$. Using this fact in the sensitivity result of Lemma~\ref{le:sensitivity-F} implies that $\Fhat^N \to F$ uniformly a.s. on the set $\XX$. Finally, we arrive at the conclusion using Lemma~\ref{le:cont-sol-set}.
	\end{proof}

\subsection{Exponential Convergence of $\SShat^N$}\label{subsec:exp-conv} 

Here, our strategy will be to use the concentration inequality for the empirical $\CVaR$ given in~\cite{YW-FG:10-orl} and derive the uniform exponential convergence of $\Fhat^N$ to $F$. Later, we will use Lemma~\ref{le:cont-sol-set} to infer exponential convergence of $\SShat^N$. Note that the inequality given in~\cite{YW-FG:10-orl} requires compact support of the random variable and it is tight when it comes to the dependency on the risk parameter $\alpha$. For unbounded support, one can use deviation inequalities from~\cite{RKK-PLA-SPB-KJ:10-orl}.

For a fixed $i \in \until{n}$ and $x \in \XX$, the deviation between the $\CVaR$ and its empirical counterpart can be bounded using the results in~\cite{YW-FG:10-orl} as
\begin{align}\label{eq:pw-CVaR-bound}
	\Pb^N \Bigl(\absb{\CVaRhat_\alpha^N  & [f_i(x,u)]  - \CVaR^\Pb_\alpha[f_i(x,u)]} \ge \eps \Bigr) 
 \le 6 \exp \Bigl(-\frac{\alpha \eps^2}{11 (L-\ell)^2} N \Bigr).
\end{align} 
In the above bound, the denominator in the exponent uses the fact that any realization of $f_i(x,u)$ given any $x$ is supported on the compact set $[\ell,L]$. Similar to the narrative of the previous section, while the above inequality holds pointwise, what we need is uniform exponential bound for proximity of $F$ to $\Fhat^N$. Below, we will derive such a bound under the following condition.

\begin{assumption}\longthmtitle{Uniform Lipschitz continuity of $f_i$}\label{as:exp-conv}
	There exists a constant $M > 0$ such that
	\begin{align}\label{eq:lips-psi}
		\abs{f_i(x,u) - f_i(x',u)} \le M \norm{x - x'},
	\end{align}
	for all $x, x' \in \XX$, $u \in \UU$, and $i \in \until{n}$.  \oprocend
\end{assumption}
Under the above Lipschitz condition on the random functions, one can show the following.

\begin{lemma}\longthmtitle{Lipschitz continuity of (empirical) $\CVaR$}\label{le:CVaR-Lip}
	Under Assumption~\ref{as:exp-conv}, for any $i \in \until{n}$, functions $x \mapsto \CVaRhat_\alpha^N[f_i(x,u)]$ and $x \mapsto \CVaR^\Pb_\alpha[f_i(x,u)]$ are Lipschitz continuous over the set $\XX$ with constant $\frac{M}{\alpha}$.
\end{lemma}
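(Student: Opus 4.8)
The plan is to exploit the variational representation of the $\CVaR$ as the infimum over $t$ of $\psi_i(x,t)$ (and of $\psihat^N_i(x,t)$ for the empirical version), and to reduce the claim to two observations: first, that for every fixed $t$ the map $x \mapsto \psi_i(x,t)$ is Lipschitz in $x$ with constant $\frac{M}{\alpha}$ \emph{uniformly in} $t$; and second, that a pointwise infimum of a family of functions sharing a common Lipschitz constant is itself Lipschitz with that same constant. Since both $\CVaR^\Pb_\alpha[f_i(\cdot,u)] = \inf_{t \in \real} \psi_i(\cdot,t)$ and $\CVaRhat^N_\alpha[f_i(\cdot,u)] = \inf_{t \in \real} \psihat^N_i(\cdot,t)$ are of this form, the two statements in the lemma will follow from a single argument.

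For the first observation, I would fix $t \in \real$ and $u \in \UU$ and use that the positive-part operator $[\,\cdot\,]_+$ is nonexpansive (i.e.\ $1$-Lipschitz on $\real$). Combined with Assumption~\ref{as:exp-conv}, this gives $\abs{[f_i(x,u)-t]_+ - [f_i(x',u)-t]_+} \le \abs{f_i(x,u) - f_i(x',u)} \le M\norm{x-x'}$ for all $x,x' \in \XX$. Taking expectations under $\Pb$ (respectively averaging over the samples $\uhat^1,\dots,\uhat^N$) and noting that the stand-alone $t$ term cancels in the difference, the leading factor $\frac{1}{\alpha}$ then yields $\abs{\psi_i(x,t) - \psi_i(x',t)} \le \frac{M}{\alpha}\norm{x-x'}$, and identically $\abs{\psihat^N_i(x,t) - \psihat^N_i(x',t)} \le \frac{M}{\alpha}\norm{x-x'}$. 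The crucial point is that this bound is independent of $t$.

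For the second observation, writing $g(x) = \inf_{t \in \real} \psi_i(x,t)$, I would argue as follows: given $x,x' \in \XX$ and $\eta > 0$, choose $t_\eta$ with $\psi_i(x',t_\eta) \le g(x') + \eta$ (such $t_\eta$ exists, and in fact Lemma~\ref{le:cvar-opt-compact} guarantees the infimum is attained, so one may even take $\eta = 0$). Then $g(x) \le \psi_i(x,t_\eta) \le \psi_i(x',t_\eta) + \frac{M}{\alpha}\norm{x-x'} \le g(x') + \eta + \frac{M}{\alpha}\norm{x-x'}$; letting $\eta \downarrow 0$ and exchanging the roles of $x$ and $x'$ gives $\abs{g(x) - g(x')} \le \frac{M}{\alpha}\norm{x-x'}$. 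The same chain applied to $\psihat^N_i$ proves the empirical case. I do not anticipate a genuine obstacle here; the only subtlety worth flagging is ensuring the Lipschitz constant from the first observation is truly uniform in $t$, since it is precisely this uniformity that lets the infimum inherit the constant $\frac{M}{\alpha}$. Attainment of the minimizers via Lemma~\ref{le:cvar-opt-compact} removes any concern about the infimum being unattained or equal to $-\infty$.
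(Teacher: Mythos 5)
Your proposal is correct and follows essentially the same route as the paper: establish that $x \mapsto \psihat^N_i(x,t)$ (and $x \mapsto \psi_i(x,t)$) is Lipschitz with constant $\frac{M}{\alpha}$ uniformly in $t$ via the nonexpansiveness of $[\,\cdot\,]_+$ and Assumption~\ref{as:exp-conv}, and then pass the constant through the infimum over $t$ by comparing at (near-)minimizers. The paper works directly with attained minimizers $\tb,\tb'$ where you allow $\eta$-approximate ones, but this is an immaterial difference.
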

	\begin{proof}
		We will show the property for the function $x \mapsto \CVaRhat_\alpha^N[f_i(x,u)]$. The reasoning for $x \mapsto \CVaR^\Pb_\alpha[f_i(x,u)]$ follows analogously. Consider any $x, x' \in \XX$. Recall from~\eqref{eq:psi-maps} that
		\begin{align}
			\absb{\CVaRhat^N_\alpha[f_i(x,u)]
				& - \CVaRhat^N_\alpha[f_i(x',u)]} = \absb{\inf_{t \in \real} \psihat^N_i(x,t) - \inf_{t \in \real} \psihat^N_i(x',t)}. \label{eq:cvar-psi}
		\end{align}
		Assumption~\ref{as:exp-conv} yields the Lipschitz continuity property for the map $\psihat^N_i$. To establish this, fix any $i \in \until{n}$ and $t \in \real$ and notice that
		\begin{align*}
			\absb{\psihat^N_i
				(x,t)  - \psihat^N_i(x',t)}  &= \absb{t + \frac{1}{N \alpha}  \sum_{j=1}^N [f_i(x,\uhat^j) - t]_+ 
				\\
				& \qquad \qquad - \Bigl( t + \frac{1}{N \alpha}  \sum_{j=1}^N [f_i(x',\uhat^j) - t]_+ \Bigr)}
			\\
			& \le \frac{1}{N \alpha} \sum_{j=1}^N \absb{[f_i(x,\uhat^j) - t]_+ - [f_i(x',\uhat^j) - t]_+} 
			\\
			& \le \frac{1}{N \alpha} \sum_{j=1}^N \absb{f_i(x,\uhat^j) - f_i(x',\uhat^j)} \le \frac{M}{\alpha} \norm{x - x'}.
		\end{align*}
		In the above relations, the first is a consequence of the triangle inequality, the second inequality follows from the fact that the map $[ \, \cdot \, ]_+$ is Lipschitz continuous with constant as unity, and the last inequality uses the Lipschitz continuity property of $f_i$. Now let $\tb, \tb' \in \real$ be such that $\psihat^N_i(x,\tb) = \inf_{t \in \real} \psihat^N_i(x,t)$ and $\psihat^N_i(x',\tb') = \inf_{t \in \real} \psihat^N_i(x',t)$. Existence of such an optimizer follows from the discussion in~\cite[Section 6.2.4]{AS-DD-AR:14}. Next note the following sequence of inequalities that can be inferred from the optimality condition and the Lipschitz continuity property of $\psihat^N_i$ shown above,
		\begin{align}
			\inf_{t \in \real} \psihat^N_i(x,t)
			& = \psihat^N_i(x,\tb) \le \psihat^N_i(x,\tb') \le \psihat^N_i(x', \tb') + \frac{M}{\alpha} \norm{x - x'} \notag
			\\
			& = \inf_{t \in \real} \psihat^N_i(x',t) + \frac{M}{\alpha} \norm{x - x'}. \label{eq:inf-psi-ineq-1}
		\end{align}
		One can exchange $x$ with $x'$ in the above reasoning and obtain
		\begin{align}
			\inf_{t \in \real} \psihat^N_i(x',t) \le \inf_{t \in \real} \psihat^N_i(x,t) + \frac{M}{\alpha} \norm{x - x'}. \label{eq:inf-psi-ineq-2}
		\end{align}
		Inequalities~\eqref{eq:inf-psi-ineq-1} and~\eqref{eq:inf-psi-ineq-2} imply that
		\begin{align*}
			\absb{\inf_{t \in \real} \psihat^N_i(x,t) - \inf_{t \in \real} \psihat^N_i(x',t)} \le \frac{M}{\alpha} \norm{x - x'}.
		\end{align*}
		The proof concludes by using this fact in~\eqref{eq:cvar-psi}.
	\end{proof}

Next, we establish the exponential convergence of $\Fhat^N$. The proof is largely inspired from the steps given in~\cite[Theorem 5.1]{AS-HX:08} and is a standard argument in these set of results. We note that the obtained bound is very crude and in practice, the achieved performance is much better.

\begin{proposition}\longthmtitle{Uniform exponential convergence of $\Fhat^N$ to $F$}\label{pr:exp-conv-F}
	Under Assumption~\ref{as:exp-conv}, for any $0 < \eps < \diam(\XX)/2$, the following inequality holds for all $N \in \naturalnumbers$,
	\begin{align*}
		\Pb^N \Bigl( \sup_{x \in \XX} \norm{\Fhat^N(x) - F(x)} > \eps \Bigr) \le \gamma(\eps) \exp(-\beta(\eps)N),
	\end{align*}
	where
	\begin{subequations}\label{eq:g-b}
		\begin{align}
			\gamma(\eps) & :=  6 n \Bigl( \frac{12 M \diam(\XX)}{\eps \alpha} \Bigr)^n \frac{\ceil{n/2}!}{2 \pi^{n/2}}, \label{eq:gamma} 
			\\
			\beta(\eps) & := \frac{\alpha \eps^2}{44 n (L-\ell)^2} , \label{eq:beta}
		\end{align}
	\end{subequations}
	and $\diam(\XX) = \sup_{x, x' \in \XX} \norm{x-x'}$ is the diameter of $\XX$.
\end{proposition}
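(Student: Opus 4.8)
The plan is to upgrade the pointwise estimate~\eqref{eq:pw-CVaR-bound} to a bound that is uniform over the compact set $\XX$ by means of a covering (finite $\nu$-net) argument, using the Lipschitz regularity furnished by Lemma~\ref{le:CVaR-Lip} to interpolate between net points. Throughout I would write $G^N := \Fhat^N - F$, so that the quantity to be controlled is $\Pb^N\bigl(\sup_{x \in \XX} \norm{G^N(x)} > \eps\bigr)$. The guiding principle is that uniform control over the uncountable family $\{x \in \XX\}$ reduces to control over finitely many points, at the cost of a covering-number prefactor, provided $G^N$ does not vary too much between a point and its nearest net point.

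First I would record the Lipschitz bookkeeping. Under Assumption~\ref{as:exp-conv}, Lemma~\ref{le:CVaR-Lip} gives that each $\Fhat^N_i$ and each $F_i$ is $\tfrac{M}{\alpha}$-Lipschitz on $\XX$, so every component of $G^N$ is $\tfrac{2M}{\alpha}$-Lipschitz and hence $\norm{G^N(x) - G^N(x')} \le \tfrac{2\sqrt{n} M}{\alpha} \norm{x - x'}$ for all $x, x' \in \XX$. Next I would fix a finite $\nu$-net $\{x_1, \dots, x_K\} \subset \XX$ (every point of $\XX$ lies within distance $\nu$ of some $x_k$) with the choice $\nu := \tfrac{\eps \alpha}{4 \sqrt{n} M}$. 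With this radius the Lipschitz bound yields $\norm{G^N(x)} \le \norm{G^N(x_k)} + \tfrac{\eps}{2}$ whenever $\norm{x - x_k} \le \nu$, so that the event $\{\sup_{x \in \XX} \norm{G^N(x)} > \eps\}$ is contained in $\bigcup_{k=1}^K \{\norm{G^N(x_k)} > \tfrac{\eps}{2}\}$.

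On each net point I would pass to components through $\norm{v} \le \sqrt{n} \max_i \abs{v_i}$: the event $\{\norm{G^N(x_k)} > \tfrac{\eps}{2}\}$ forces $\abs{\Fhat^N_i(x_k) - F_i(x_k)} > \tfrac{\eps}{2\sqrt{n}}$ for at least one $i \in \until{n}$. Invoking~\eqref{eq:pw-CVaR-bound} with the threshold $\tfrac{\eps}{2\sqrt{n}}$ bounds the probability of each such event by $6 \exp\bigl(-\tfrac{\alpha \eps^2}{44 n (L-\ell)^2} N\bigr) = 6 \exp(-\beta(\eps) N)$, which is exactly where $\beta(\eps)$ in~\eqref{eq:beta} originates (the $4n$ in the denominator arises from squaring the threshold $\tfrac{\eps}{2\sqrt{n}}$, contributing $\tfrac14$ and $\tfrac1n$ against the $11$ of~\eqref{eq:pw-CVaR-bound}). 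A union bound over the $n$ components and the $K$ net points then produces the prefactor $6nK$, and it remains to estimate $K$. The number of radius-$\nu$ balls needed to cover a set of diameter $\diam(\XX)$ in $\real^n$ admits a volumetric estimate in which the reciprocal of the unit Euclidean ball volume contributes the factor $\tfrac{\ceil{n/2}!}{2 \pi^{n/2}}$; substituting $\nu = \tfrac{\eps \alpha}{4 \sqrt{n} M}$ and collecting the resulting powers of $\diam(\XX)$, $M$, $\eps$, $\alpha$ and $\sqrt{n}$ is intended to give $K \le \bigl(\tfrac{12 M \diam(\XX)}{\eps \alpha}\bigr)^n \tfrac{\ceil{n/2}!}{2 \pi^{n/2}}$, so that $6nK = \gamma(\eps)$ as in~\eqref{eq:gamma}.

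The conceptual steps, namely Lipschitz interpolation, the union bound, and the pointwise concentration inequality, are routine; the delicate part is the constant bookkeeping in the last step. The main obstacle is making the covering-number estimate explicit enough to land exactly on $\gamma(\eps)$: one must track how the $\sqrt{n}$ coming from the norm equivalence $\norm{\cdot} \le \sqrt{n}\max_i\abs{\cdot}$ (and hence from $\nu \propto 1/\sqrt{n}$) recombines with the gamma-function volume constant $\tfrac{\ceil{n/2}!}{2\pi^{n/2}}$, so that the base $\tfrac{12 M \diam(\XX)}{\eps\alpha}$ ends up free of $\sqrt{n}$. This is also where the standing hypothesis $0 < \eps < \diam(\XX)/2$ should enter, ensuring that the net radius $\nu$ is small relative to $\diam(\XX)$, which is what makes the covering-number bound valid in the stated clean form.
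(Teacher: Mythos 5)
Your proposal follows the paper's proof essentially step for step: a finite net on $\XX$, the Lipschitz bound of Lemma~\ref{le:CVaR-Lip} to interpolate from net points, the pointwise deviation inequality~\eqref{eq:pw-CVaR-bound} combined with a union bound over the $n$ components and the net, and the volumetric covering estimate of Lemma~\ref{le:gen-cover}; the exponent $\beta(\eps)$ comes out identically. The only organizational difference is that you work with the vector-valued difference $\Fhat^N - F$ from the start and build the factor $\sqrt{n}$ into the net radius $\nu = \eps\alpha/(4\sqrt{n}M)$, whereas the paper first establishes a uniform bound on the scalar quantities $\sup_{i,x}\absb{\CVaRhat^N_\alpha[f_i(x,u)] - \CVaR^\Pb_\alpha[f_i(x,u)]}$ at level $\eps$, using net radius $\eps\alpha/(4M)$, and only at the very end substitutes $\eps/\sqrt{n}$. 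The bookkeeping point you flag as the main obstacle is a real one and does not resolve in your favour: with $\nu = \eps\alpha/(4\sqrt{n}M)$ the volumetric bound gives $K \le \bigl(12\sqrt{n}M\diam(\XX)/(\eps\alpha)\bigr)^n/\vol(B)$, and the extra $n^{n/2}$ cannot be absorbed into $\ceil{n/2}!/(2\pi^{n/2})$, which is already fully spent bounding $1/\vol(B)$. Be aware, though, that the paper's own write-up reaches the stated $\gamma(\eps)$ only because it keeps the covering number evaluated at $\eps$ while replacing $\eps$ by $\eps/\sqrt{n}$ in the exponent when passing from~\eqref{eq:uniform-exp-psi-bd} to the final display; a fully consistent execution of either version of the argument yields a $\gamma(\eps)$ whose base is $12\sqrt{n}M\diam(\XX)/(\eps\alpha)$. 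So your derivation is sound and matches the paper's in substance; the mismatch lies in the stated prefactor rather than in your reasoning, and it has no effect on the exponential rate $\beta(\eps)$.
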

\begin{proof}
	The idea of moving from the pointwise exponential bound~\eqref{eq:pw-CVaR-bound} to a uniform bound is to impose the pointwise bound jointly on a finite number of points and use the Lipschitz continuity property (Lemma~\ref{le:CVaR-Lip}) to bound the deviation of the rest of the set from this finite set. Making precise the mathematical details, note that one can cover the set $\XX$ with 
	\begin{align}\label{eq:K}
		K := \Bigl( \frac{12 M \diam(\XX)}{\eps \alpha} \Bigr)^n \frac{\ceil{n/2}!}{2 \pi^{n/2}}
	\end{align} 
	number of points, labeled $\CC := \{\xtil^1, \dots, \xtil^K\} \subset \XX$, such that for any $x \in \XX$, there exists a point $\xtil^{i(x)} \in \CC$ with
	\begin{align}\label{eq:cover-cond}
		\frac{M}{\alpha} \norm{x-\xtil^{i(x)}} \le \frac{\eps}{4}.
	\end{align}
	The number $K$ can be computed as follows. From~\eqref{eq:cover-cond}, we require $\norm{x - \xtil^{i(x)}} \le \frac{\eps \alpha }{4 M}$. Thus, from Definition~\ref{def:cover}, the number of points in $\CC$ need only be bigger than the $\frac{\eps \alpha}{4 M}$-covering number of $\XX$. Thus, any upper bound on this covering number suffices. From Lemma~\ref{le:gen-cover}, one such upper bound is $\Bigl( \frac{3 \diam(\XX)}{(\eps \alpha/ 4 M)} \Bigr)^{n} \frac{1}{\vol(B)}$, where $\vol(B)$ is the volume of the unit norm ball $B$ in $\real^{n}$. Since $\vol(B) \ge \frac{2 \pi^{n/2}}{\ceil{n/2}!}$, 
	we get the desired value for $K$ given in~\eqref{eq:K}. 
	Having identified the set of points $\CC$, we next combine the Lipschitz bound given in Lemma~\ref{le:CVaR-Lip} and the inequality~\eqref{eq:cover-cond}, to get for all $i \in \until{n}$ and $x \in \XX$,
	\begin{subequations}\label{eq:lips-compact}
		\begin{align}
			\absb{ \CVaRhat_\alpha^N[f_i(x,u)] - \CVaRhat_\alpha^N[f_i(\xtil^{i(x)},u)]} \le \frac{\eps}{4},
			\\
			\absb{\CVaR^\Pb_\alpha[f_i(x,u)] - \CVaR^\Pb_\alpha[f_i(\xtil^{i(x)},u)]} \le \frac{\eps}{4}.
		\end{align}	
	\end{subequations}
	The above inequalities control the deviation of $\CVaRhat_\alpha^N[f_i(\cdot,u)]$ and $\CVaR^\Pb_\alpha[f_i(\cdot,u)]$ over the set $\XX$ from the values these functions take on the set $\CC$. The next step entails bounding the deviation of the $\CVaR$ and the empirical $\CVaR$ on the set $\CC$. Employing~\eqref{eq:pw-CVaR-bound} and the union bound, we have
	\begin{align}
		\Pb^N \Bigl( \sup_{i \in \until{n}, x \in \CC} & \absb{\CVaRhat^N_\alpha[f_i(x,u)] - \CVaR^\Pb_\alpha[f_i(x,u)]} \ge  \frac{\eps}{2} \Bigr) \notag
		\\
		&   \le \sum_{i \in \until{n}}  \sum_{x \in \CC} \Pb^N \Bigl( \absb{\CVaRhat^N_\alpha  [f_i(x,u)] 
			 -  \CVaR^\Pb_\alpha  [f_i(x,u)]}  \ge  \frac{\eps}{2} \Bigr) \notag
		\\
		&  \le 6 n K  \exp \Bigl(-\frac{\alpha \eps^2}{44 (L-\ell)^2} N \Bigr). \label{eq:sup-c-bound}
	\end{align}
	The next set of inequalities characterize the difference between the $\CVaR$ and the empirical $\CVaR$ over the set $\XX$ using the Lipschitz continuity property~\eqref{eq:lips-compact}. Fix $i \in \until{n}$ and let $x \in \XX$. Note that using~\eqref{eq:lips-compact},
	\begin{align*}
		 \abs{\CVaRhat^N_\alpha[f_i(x,u)] &- \CVaR^\Pb_\alpha[f_i(x,u)]} 
		\\
		& \le \abs{\CVaRhat^N_\alpha[f_i(x,u)] - \CVaRhat^N_\alpha[f_i(\xtil^{i(x)},u)]} 
		\\
		&  \qquad \quad + \abs{\CVaRhat^N_\alpha[f_i(\xtil^{i(x)},u)] - \CVaR^\Pb_\alpha[f_i(\xtil^{i(x)},u)]} 
		\\
		&  \qquad \quad + \abs{\CVaR^\Pb_\alpha[f_i(\xtil^{i(x)},u)] - \CVaR^\Pb_\alpha[f_i(x,u)]}
		\\
		& \le \frac{\eps}{2} + \abs{\CVaRhat^N_\alpha[f_i(\xtil^{i(x)},u)] - \CVaR^\Pb_\alpha[f_i(\xtil^{i(x)},u)]}.
	\end{align*}
	Next, the deviation between the $\CVaR$ and its empirical counterpart is bounded using~\eqref{eq:sup-c-bound} and the above characterization as
	\begin{align}
		 \Pb^N \Bigl( \sup_{i \in \until{n}, x \in \XX} & \absb{\CVaRhat^N_\alpha[f_i(x,u)]  -  \CVaR^\Pb_\alpha[f_i(x,u)]}  \ge  \eps \Bigr) \notag
		\\
		& \le  \Pb^N  \Bigl( \sup_{i \in \until{n}, x \in \XX}  \absb{ \CVaRhat^N_\alpha  [f_i(x,t)] -  \CVaR^\Pb_\alpha [f_i(x,u)]}  \ge  \frac{\eps}{2} \Bigr) \notag
		\\
		& \le 6 n K  \exp \Bigl(-\frac{\alpha \eps^2}{44 (L-\ell)^2} N \Bigr) \label{eq:uniform-exp-psi-bd}
	\end{align}
	The final step is to connect the above inequality to the difference between $\Fhat^N$ and $F$. From the proof of Lemma~\ref{le:sensitivity-F}, one can deduce that if $\sup_{x \in \XX} \norm{\Fhat^N(x) - F(x)} > \eps$, then
	\begin{align*}
		\sup_{i \in \until{n}, x \in \XX} \absb{\CVaRhat^N_\alpha[f_i(x,u)] - \CVaR^\Pb_\alpha[f_i(x,u)]}  >  \frac{\eps}{\sqrt{n }}.
	\end{align*}
	Therefore, using~\eqref{eq:uniform-exp-psi-bd} we obtain
	\begin{align*}
		\Pb^N (\sup_{x \in \XX} \norm{\Fhat^N(x) - F(x)} > \eps) &\le \Pb^N\Bigl( \sup_{i \in \until{n}, x \in \XX} \absb{\CVaRhat^N_\alpha [f_i(x,u)] 
			\\
			&  \qquad \qquad - \CVaR^\Pb_\alpha[f_i(x,u)] } > \frac{\eps}{ \sqrt{n}} \Bigr) 
		\\
		&\le 6 n K \exp \Bigl(-\frac{\alpha \eps^2}{44 n (L-\ell)^2} N \Bigr).
	\end{align*}
	This concludes the proof.
\end{proof}

The main result is given below. The proof follows from the uniform exponential convergence of $\Fhat^N$.

\begin{theorem}\longthmtitle{Exponential convergence of $\SShat^N$ to $\Scal$}\label{th:exp-conv}
	Let Assumption~\ref{as:exp-conv} hold.  Then, for any
	$0 < \eps < \diam(\XX)/2$, and $N \in \naturalnumbers$, the following inequality holds
	\begin{align*}
		\Pb^N \bigl(\Db(\SShat^N, \Scal) \le \eps \bigr) \ge 1- \gamma(\delta(\eps)) e^{-\beta(\delta(\eps))N},
	\end{align*}	
	where $\gamma$ and $\beta$ are given in~\eqref{eq:g-b} and $\map{\delta}{\realpositive}{\realpositive}$ is a map such that the pair $(\eps,\delta(\eps))$ satisfies the condition of Lemma~\ref{le:cont-sol-set}.
\end{theorem}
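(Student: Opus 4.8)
The plan is to obtain the result as a direct composition of two already-established facts: the deterministic sensitivity of the solution set in Lemma~\ref{le:cont-sol-set} and the uniform exponential deviation bound of Proposition~\ref{pr:exp-conv-F}. The pivotal observation is that Lemma~\ref{le:cont-sol-set} is a purely deterministic (sample-path) implication: for the fixed $\eps$ under consideration, whenever the realized sup-deviation $\sup_{x \in \XX} \norm{\Fhat^N(x) - F(x)}$ does not exceed $\delta(\eps)$, one is guaranteed $\Db(\SShat^N, \Scal) \le \eps$. Consequently, as events on the underlying probability space,
\[
	\bigl\{\Db(\SShat^N, \Scal) > \eps\bigr\} \subseteq \Bigl\{\sup_{x \in \XX} \norm{\Fhat^N(x) - F(x)} > \delta(\eps)\Bigr\},
\]
which is just the contrapositive of the implication lifted to the level of realizations.

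First I would record this event inclusion and apply monotonicity of $\Pb^N$ to get
\[
	\Pb^N\bigl(\Db(\SShat^N, \Scal) > \eps\bigr) \le \Pb^N\Bigl(\sup_{x \in \XX} \norm{\Fhat^N(x) - F(x)} > \delta(\eps)\Bigr).
\]
Next I would invoke Proposition~\ref{pr:exp-conv-F} with the perturbation level $\delta(\eps)$ in place of $\eps$, bounding the right-hand side by $\gamma(\delta(\eps)) \exp(-\beta(\delta(\eps)) N)$. Passing to complements then yields
\[
	\Pb^N\bigl(\Db(\SShat^N, \Scal) \le \eps\bigr) \ge 1 - \gamma(\delta(\eps)) e^{-\beta(\delta(\eps)) N},
\]
which is precisely the asserted inequality.

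The single point requiring care is the admissible range of the argument: Proposition~\ref{pr:exp-conv-F} is stated only for levels in $(0, \diam(\XX)/2)$, yet a priori the threshold $\delta(\eps)$ supplied by Lemma~\ref{le:cont-sol-set} carries no such restriction. I would resolve this by exploiting the monotonicity of the implication in its threshold: if $\Db(\SShat^N, \Scal) \le \eps$ is guaranteed whenever the sup-deviation is below $\delta(\eps)$, the same conclusion holds whenever it is below any smaller positive number. Hence one may replace $\delta(\eps)$ by $\min\{\delta(\eps), \eps\}$ without invalidating the pair's membership in the condition of Lemma~\ref{le:cont-sol-set}, and since $\eps < \diam(\XX)/2$ by hypothesis, this truncated choice lies in the range where Proposition~\ref{pr:exp-conv-F} applies. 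With $\delta$ understood in this truncated sense, the chain of bounds above goes through verbatim.

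I do not expect a genuine obstacle here, since essentially all the analytic content—the set-valued sensitivity estimate and the covering-number-plus-concentration argument—has already been discharged in Lemma~\ref{le:cont-sol-set} and Proposition~\ref{pr:exp-conv-F}. The only substantive bookkeeping is the range restriction on $\delta$ described above; everything else is a two-line event inclusion followed by complementation.
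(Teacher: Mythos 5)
Your proposal is correct and follows essentially the same route as the paper: the contrapositive of Lemma~\ref{le:cont-sol-set} gives the event inclusion, Proposition~\ref{pr:exp-conv-F} applied at level $\delta(\eps)$ bounds the bad event, and complementation finishes. Your extra remark about truncating $\delta(\eps)$ to $\min\{\delta(\eps),\eps\}$ so that Proposition~\ref{pr:exp-conv-F} is invoked within its stated range $(0,\diam(\XX)/2)$ is a valid refinement that the paper's proof silently omits.
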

	\begin{proof}
		Consider any $\eps > 0$. By Lemma~\ref{le:cont-sol-set}, if $\sup_{x  \in \XX} \norm{\Fhat^N(x) - F(x)} \le \delta(\eps)$, then $\Db(\SShat^N, \Scal) \le \eps$. From Proposition~\ref{pr:exp-conv-F}, for any $\delta(\eps) > 0$, there exist $\gamma(\delta(\eps))$ and $\beta(\delta(\eps))$, given in~\eqref{eq:gamma} and~\eqref{eq:beta}, respectively, such that
		\begin{align*}
			\Pb^N \Bigl( \sup_{x \in \XX} \norm{\Fhat^N(x) - F(x)} > \delta(\eps) \Bigr) \le \gamma(\delta(\eps)) e^{-\beta(\delta(\eps)) N}
		\end{align*}
		for all $N$. The proof follows by using the above facts and the set of inequalities: 
		\begin{align*}
			\Pb^N (\Db(\SShat^N,&\Scal)   \le  \eps)  \ge \Pb^N  \Bigl( \sup_{x \in \XX} \norm{\Fhat^N(x) - F(x)}  \le  \delta(\eps) \Bigr) 
			\\
			& = 1 - \Pb^N \Bigl( \sup_{x \in \XX} \norm{\Fhat^N(x) - F(x)} > \delta(\eps) \Bigr). \hspace*{-1ex}\qed
		\end{align*}
		\renewcommand{\qedsymbol}{}
	\end{proof}

\begin{remark}\longthmtitle{Sample guarantees for approximating $\Scal$ with $\SShat^N$}\label{re:sample}
	{\rm Theorem~\ref{th:exp-conv} implies that if one wants $\Db(\SShat^N,\Scal) \le \eps$ with confidence $1-\zeta$, where $\zeta \in (0,1)$ is a small positive number, then one would require at most
		\begin{align*}
			N(\zeta,\eps)
			& = \frac{1}{\beta(\delta(\eps))} \log \Bigl( \frac{\gamma(\delta(\eps))}{\zeta} \Bigr)
			\\
			& =  \frac{44 n (L-\ell)^2}{\alpha \delta(\eps)^2} \Bigl( \log \Bigl( \frac{ 6 n \ceil{n/2}! }{ 2 \pi^{n/2} \zeta} \Bigr) 
			+ n  \log \Bigl( \frac{12 M \diam(\XX)}{\eps \alpha} \Bigr) \Bigr)
		\end{align*} 
		number of samples of the random variable. Due to the exponential rate, a good feature of this sample guarantee is that $N$ depends on the accuracy $\zeta$ logarithmically. That is, one can obtain high confidence bounds with fewer samples. However, the sample size grows poorly with other parameters, especially, $\delta(\eps)$ and the dimension $n$. Further, note that to obtain an accurate sample guarantee, one needs to estimate $\delta(\cdot)$ which depends on the regularity of random functions. Improving the sample complexity for specific random functions is discussed in the following section. 
	} \oprocend
\end{remark}

\section{Separable Uncertain Functions}\label{sec:seperable}

Here we illustrate how specific structure of the random functions yields tighter  sample guarantees. Further, we discuss the tractability of solving the sample average VI problem. 
\begin{proposition}\label{pr:exp-sep}\longthmtitle{Exponential convergence for separable functions}
	Assume that the random functions have the form
	\begin{equation}\label{eq:cost-form}
		f_i(x,u) = \tilde{f}_i(x)g_i(u)+ \check{f}_i(x) \text{ for all } i \in \until{n},
	\end{equation}
	where $\tilde{f}_i$, $g_i$, and $\check{f}_i$ are non-negative real-valued continuous functions. Then, for any $\eps > 0$ and $N \in \naturalnumbers$, the following holds
	\begin{align}
		\Pb^N \bigl(\Db(\SShat^N, \Scal) \le \eps \bigr) \ge 1- 6 n e^{-\beta(\delta(\eps))N}, \label{eq:exp-conv-sep}
	\end{align}
	where
	\begin{align*}
		\beta(\eps) := \frac{\alpha \eps^2}{11 n (f^{\mathrm{max}} g^{\mathrm{rge}})^2},
	\end{align*}
	with $f^{\mathrm{max}} := \sup_{i \in \until{n}, x \in \XX} \tilde{f}_i(x)$ and $g^{\mathrm{rge}} := \sup_{i \in \until{n}} \bigl( \sup_{u \in \UU} g_i(u) - \inf_{u \in \UU} g_i(u) \bigr)$.
\end{proposition}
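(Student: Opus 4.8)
The plan is to exploit the separable structure so that the randomness decouples from the decision variable $x$, thereby eliminating the need for the covering-number argument that produced the prefactor $\gamma(\eps)$ in Proposition~\ref{pr:exp-conv-F}. The key observation is that $\CVaR$ is translation invariant and positively homogeneous: for a deterministic constant $c$ and a scalar $\lambda \ge 0$, one has $\CVaR_\alpha^\Pb[\lambda Z + c] = \lambda \CVaR_\alpha^\Pb[Z] + c$, and the same identity holds for the empirical operator $\CVaRhat_\alpha^N$ since the latter is simply the $\CVaR$ under the empirical distribution. Applying this with $\lambda = \tilde f_i(x) \ge 0$ (the assumed non-negativity is exactly what is needed for homogeneity) and $c = \check f_i(x)$ to the form~\eqref{eq:cost-form} gives, for every $x \in \XX$,
\begin{align*}
\CVaR_\alpha^\Pb[f_i(x,u)] &= \tilde f_i(x)\,\CVaR_\alpha^\Pb[g_i(u)] + \check f_i(x),\\
\CVaRhat_\alpha^N[f_i(x,u)] &= \tilde f_i(x)\,\CVaRhat_\alpha^N[g_i(u)] + \check f_i(x).
\end{align*}
Subtracting, the $\check f_i(x)$ terms cancel and the $x$-dependence collapses into the scalar multiplier $\tilde f_i(x)$, so that $\absb{\CVaRhat_\alpha^N[f_i(x,u)] - \CVaR_\alpha^\Pb[f_i(x,u)]} = \tilde f_i(x)\,\absb{\CVaRhat_\alpha^N[g_i(u)] - \CVaR_\alpha^\Pb[g_i(u)]}$.

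Next I would take the supremum over $x$. Since $\tilde f_i \ge 0$, this yields $\sup_{x \in \XX}\absb{\CVaRhat_\alpha^N[f_i(x,u)] - \CVaR_\alpha^\Pb[f_i(x,u)]} = f^{\mathrm{max}}_i\,\absb{\CVaRhat_\alpha^N[g_i(u)] - \CVaR_\alpha^\Pb[g_i(u)]}$, where $f^{\mathrm{max}}_i := \sup_{x \in \XX}\tilde f_i(x)$. The right-hand side is a purely scalar concentration quantity for the random variable $g_i(u)$, which is supported on an interval of length $g_i^{\mathrm{rge}} := \sup_{u \in \UU} g_i(u) - \inf_{u \in \UU} g_i(u)$. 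Hence the concentration inequality~\eqref{eq:pw-CVaR-bound}, applied to $g_i(u)$ in place of $f_i(x,u)$ and with $L-\ell$ replaced by $g_i^{\mathrm{rge}}$, controls the uniform-in-$x$ deviation at no extra cost: rewriting the event as $\{\absb{\CVaRhat_\alpha^N[g_i(u)] - \CVaR_\alpha^\Pb[g_i(u)]} \ge \eps/(\sqrt n\, f^{\mathrm{max}}_i)\}$ gives
\begin{align*}
\Pb^N\Bigl(\sup_{x \in \XX}\absb{\CVaRhat_\alpha^N[f_i(x,u)] - \CVaR_\alpha^\Pb[f_i(x,u)]} \ge \tfrac{\eps}{\sqrt n}\Bigr) \le 6\exp\Bigl(-\frac{\alpha \eps^2}{11 n (f^{\mathrm{max}}_i g_i^{\mathrm{rge}})^2}N\Bigr).
\end{align*}
Bounding $f^{\mathrm{max}}_i g_i^{\mathrm{rge}} \le f^{\mathrm{max}} g^{\mathrm{rge}}$ uniformizes the exponent across components and replaces the component-wise denominator with the one appearing in the statement.

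To finish, I would assemble the pieces exactly as in the general case but without the covering factor. A union bound over $i \in \until{n}$, together with the observation (drawn from the proof of Lemma~\ref{le:sensitivity-F}) that $\sup_{x}\norm{\Fhat^N(x) - F(x)} > \eps$ forces $\sup_{i \in \until{n},\,x}\absb{\CVaRhat_\alpha^N[f_i(x,u)] - \CVaR_\alpha^\Pb[f_i(x,u)]} > \eps/\sqrt n$, yields $\Pb^N(\sup_{x \in \XX}\norm{\Fhat^N(x) - F(x)} > \eps) \le 6n\exp(-\beta(\eps)N)$ with $\beta$ as stated. Invoking Lemma~\ref{le:cont-sol-set} to pass from proximity of $\Fhat^N$ to $F$ to proximity of $\SShat^N$ to $\Scal$, precisely as in Theorem~\ref{th:exp-conv}, converts this into~\eqref{eq:exp-conv-sep} with $\eps$ replaced by $\delta(\eps)$ in the exponent. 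The only genuine subtlety—and the step I would treat most carefully—is justifying the factorization identities for $\CVaRhat_\alpha^N$ (that positive homogeneity and translation invariance survive under the empirical measure, with $\tilde f_i(x) \ge 0$ truly required for homogeneity); everything downstream is a simplification of already-established machinery, which is exactly why the improved constant $11$ in place of $44$ and the disappearance of the covering prefactor $\gamma(\eps)$ emerge.
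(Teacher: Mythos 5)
Your proposal is correct and follows essentially the same route as the paper's proof: the factorization via positive homogeneity and shift invariance of (empirical) $\CVaR$, reduction of the uniform-in-$x$ deviation to a scalar concentration statement for $g_i(u)$, the pointwise deviation inequality~\eqref{eq:pw-CVaR-bound} with a union bound over $i \in \until{n}$, and finally Lemma~\ref{le:cont-sol-set} as in Theorem~\ref{th:exp-conv}. The only cosmetic difference is that you make the supremum over $x$ an explicit equality $f^{\mathrm{max}}_i\,\absb{\CVaRhat_\alpha^N[g_i(u)] - \CVaR_\alpha^\Pb[g_i(u)]}$ before uniformizing the constants, whereas the paper passes directly to the inclusion of events.
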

\begin{proof}
	Since $\CVaR$ is positive-homogeneous and shift-invariant~\cite[Chapter 6]{AS-DD-AR:14}, one gets
	\begin{align}\label{eq:sep-cost-Fhat}
		\CVaRhat^N_\alpha[f_i(x,u)] = \tilde{f}_i(x) \CVaRhat^N_\alpha[g_i(u)] + \check{f}_i(x),
	\end{align}
	for all $i \in \until{n}$.  Using this fact, for any $\eps > 0$, we reason as
	\begin{align}
		& \Pb^N \Bigl( \sup_{x \in \XX} \norm{\Fhat^N(x) - F(x)} > \eps \Bigr) \notag
		\\
		& \quad \le \Pb^N \Bigl( \sup_{i \in \until{n}, x \in \XX} \absb{\CVaRhat^N_\alpha [f_i(x,u)] - \CVaR^\Pb_\alpha[f_i(x,u)] } > \frac{\eps}{ \sqrt{n}} \Bigr) \notag
		\\
		& \quad \overset{(a)}{\le} \Pb^N \Bigl( \sup_{i \in \until{n}} \, \, \absb{ \CVaRhat^N_\alpha[g_i(u)] - \CVaR^\Pb_\alpha[g_i(u)]} > \frac{\eps}{\sqrt{n} f^{\mathrm{max}}} \Bigr) \notag
		\\
		& \quad \overset{(b)}{\le} 6 n \exp \Bigl( - \frac{\alpha \eps^2}{11 n (f^{\mathrm{max}} g^{\mathrm{rge}})^2} N \Bigr),  \label{eq:sep-cost-exp-conv}
	\end{align}
	where  (a) follows from~\eqref{eq:sep-cost-Fhat} (note that a similar equality as~\eqref{eq:sep-cost-Fhat} holds for $\CVaR^\Pb_\alpha$) and the fact that $\tilde{f}$ takes non-negative values and (b) is a result of the deviation inequality~\eqref{eq:pw-CVaR-bound} applied in combination with the union bound.  Using~\eqref{eq:sep-cost-exp-conv} and proceeding along the lines of Theorem~\ref{th:exp-conv} we obtain~\eqref{eq:exp-conv-sep}. 
\end{proof}

Recounting the way we obtained exponential convergence in the previous section, the key step was in Proposition~\ref{pr:exp-conv-F} where we moved from the deviation inequality for finite number of points in $\XX$ to the uniform exponential convergence of $\Fhat$. Such an exercise was inevitable due to the possible interdependence of $x$ and $u$ in the random function. Not only that, the bound for this reason scaled poorly with many parameters, such as, the dimension $n$ and the size of $\XX$. However, when the random function takes the form~\eqref{eq:cost-form}, then one need not construct a cover for $\XX$ to derive uniform exponential convergence of $\Fhat$. Thus, we obtain a tighter bound~\eqref{eq:exp-conv-sep}.

\begin{remark}\longthmtitle{Sample guarantees and tractability for separable functions}
	Analogous to Remark~\ref{re:sample}, we deduce using Proposition~\ref{pr:exp-sep} that for separable functions~\eqref{eq:cost-form}, the accuracy $\Db(\SShat^N,\Scal) \le \eps$ with confidence $1-\zeta$ is guaranteed with
	\begin{align*}
		N(\eps,\zeta) = \frac{ 11 n (f^{\mathrm{max}} g^{\mathrm{rge}})^2}{\alpha \delta(\eps)^2} \log \Bigl( \frac{6 n}{\zeta} \Bigr)
	\end{align*}
	number of samples. As expected, the above sample size does not depend on the size of $\XX$. Further, for the above derivation we need not assume the random function to be Lipschitz continuous.
	
	We next comment about solving $\VI(\XX,\Fhat^N)$ for separable functions. For convenience, denote the concatenation of $\tilde{f}_i$ and $\check{f}_i$ for $i \in \until{n}$ with functions $\tilde{f}$ and $\check{f}$, respectively. Further, let the vector $\ghat^N := (\CVaRhat^N_\alpha[g_i(u)])_{i \in \until{n}}$ collect the empirical $\CVaR$ of the random function of each path. Then, the aim is to solve $\VI(\XX, \tilde{f} \odot \ghat^N + \check{f})$, where $\odot$ represents component-wise product. Given samples, the approach would be to compute $\ghat^N$ and then proceed to solve the VI. Note that computing each component $\ghat^N_i$ amounts to solving a linear program:
	\begin{align*}
		\ghat^N_i = \min \Biggl\{t + \frac{1}{N \alpha} \sum_{j=1}^N y_j \, \Bigg| \begin{array}{l l} y_j \ge g_p(\uhat^j) - t,
			& \, \, \forall j \in \until{N}, \\ t \in \real, y_j \ge 0,
			& \, \, \forall j \in \until{N} \end{array} \Biggr\}.
	\end{align*}
	The appealing part of this process is the deconstruction into two steps: computing the empirical $\CVaR$ independent of $x$ and solving the VI without worrying about samples. Further, one can derive conditions on the underlying functions that guarantee monotonicity of $\tilde{f} \odot \ghat^N + \check{f}$ that consequently lead to efficient algorithms that solve the VI, see e.g. methods given in~\cite{FF-JSP:03}. 
	\oprocend
\end{remark}

Note that if $F$ is strictly monotone, then the solution set $\Scal$ is a singleton. In that case, asymptotic consistency implies that all sequences $\{\xhat^N\}$ converge to the unique solution. If in addition $F$ satisfies a stronger assumption, that of being strongly monotone and having a separable form, then one can estimate the map $\delta$ used in the exponential convergence bound.  The next result formalizes this implication.
\begin{lemma}\longthmtitle{Estimating $\delta$ for strongly monotone $F$} 
	Assume that the random functions are of the form
	\begin{align*}
		f_i(x,u) = \tilde{f}_i(x) + g_i(u), \quad\text{ for all } i \in \until{n},
	\end{align*}
	where $\tilde{f}_i$ and $g_i$ are real-valued continuous functions. Suppose the concatenated function $\tilde{f} :=(f_i)_{i \in \until{n}}$ is strongly monotone over $\XX$ with modulus $\sigma > 0$. Then, $\sup_{x \in \XX} \norm{\Fhat^N(x) - F(x)} \le \eps$ implies $\Db(\SShat^N,\Scal) \le \sigma^{-1}\eps$. 
\end{lemma}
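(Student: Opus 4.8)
The plan is to recognize the lemma as the standard perturbation estimate for strongly monotone variational inequalities, after first using the separable structure to transfer strong monotonicity from $\tilde{f}$ to the CVaR-based map $F$. First I would observe that, since $g_i(u)$ does not depend on $x$, the shift-invariance of CVaR (cited from~\cite[Chapter 6]{AS-DD-AR:14} and already used in Proposition~\ref{pr:exp-sep}) gives, for each $i \in \until{n}$,
\begin{align*}
	F_i(x) = \tilde{f}_i(x) + \CVaR^\Pb_\alpha[g_i(u)], \qquad \Fhat^N_i(x) = \tilde{f}_i(x) + \CVaRhat^N_\alpha[g_i(u)].
\end{align*}
Writing $\tilde{f}(x) := (\tilde{f}_i(x))_{i \in \until{n}}$, this means $F(x) = \tilde{f}(x) + c$ and $\Fhat^N(x) = \tilde{f}(x) + \hat{c}^N$ for the constant vectors $c := (\CVaR^\Pb_\alpha[g_i(u)])_{i \in \until{n}}$ and $\hat{c}^N := (\CVaRhat^N_\alpha[g_i(u)])_{i \in \until{n}}$. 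Since adding a constant vector does not affect monotonicity, the assumed strong monotonicity of $\tilde{f}$ with modulus $\sigma$ carries over to $F$, so that $\Scal = \SOL(\XX,F)$ is a singleton, say $\Scal = \{x^*\}$.

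Next I would write out the two defining inequalities and cross-substitute. Fix any $\xhat^N \in \SShat^N$. Taking $x = \xhat^N$ in the VI defining $x^*$ and $x = x^*$ in the VI defining $\xhat^N$ yields
\begin{align*}
	(\xhat^N - x^*)^\top F(x^*) \ge 0 \qquad \text{and} \qquad (\xhat^N - x^*)^\top \Fhat^N(\xhat^N) \le 0.
\end{align*}
Combining these with the strong monotonicity of $F$ and the Cauchy--Schwarz inequality, I would chain
\begin{align*}
	\sigma \norm{\xhat^N - x^*}^2
	&\le (\xhat^N - x^*)^\top \bigl(F(\xhat^N) - F(x^*)\bigr)
	\le (\xhat^N - x^*)^\top \bigl(F(\xhat^N) - \Fhat^N(\xhat^N)\bigr)
	\\
	&\le \norm{\xhat^N - x^*} \, \sup_{x \in \XX} \norm{F(x) - \Fhat^N(x)}
	\le \eps \norm{\xhat^N - x^*},
\end{align*}
where the second inequality discards the two sign-definite terms furnished by the two VI inequalities above, and the last uses the hypothesis. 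Dividing by $\norm{\xhat^N - x^*}$ (the case $\xhat^N = x^*$ being trivial) gives $\norm{\xhat^N - x^*} \le \sigma^{-1}\eps$.

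Finally, since $\Scal = \{x^*\}$, the deviation is $\Db(\SShat^N,\Scal) = \sup_{\xhat^N \in \SShat^N} \norm{\xhat^N - x^*}$, and the bound just derived holds for every $\xhat^N \in \SShat^N$, yielding $\Db(\SShat^N,\Scal) \le \sigma^{-1}\eps$. I do not expect any serious obstacle: the core estimate is the textbook stability bound for strongly monotone VIs. The only two points meriting care are that I never need uniqueness of the SAA solution --- the same bound is produced for each element of $\SShat^N$, so that $\SShat^N$ need not be shown to be a singleton --- and that strong monotonicity of $F$ (hence the singleton $\Scal$) must be inferred from that of $\tilde{f}$ through the shift-invariance identity above, which is precisely where the separable form $f_i(x,u) = \tilde{f}_i(x) + g_i(u)$ enters.
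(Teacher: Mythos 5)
Your proposal is correct and follows essentially the same route as the paper: exploit shift-invariance of CVaR to write $F$ and $\Fhat^N$ as $\tilde{f}$ plus constant vectors, cross-substitute the two VI inequalities, and combine strong monotonicity with Cauchy--Schwarz to get $\sigma\norm{\xhat^N - x^*} \le \eps$. The only cosmetic difference is that you transfer strong monotonicity to $F$ and bound $(\xhat^N-x^*)^\top(F(\xhat^N)-\Fhat^N(\xhat^N))$ directly, whereas the paper keeps the monotonicity on $\tilde{f}$ and isolates the constant difference $\widehat{\kappa}^N-\kappa$; the two computations are line-for-line equivalent.
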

\begin{proof}
	 
	Note that $F(x) = \tilde{f}(x) + \kappa$, where components of the vector $\kappa$ are given by $\kappa_i := \CVaR^\Pb_\alpha[g_i(u)]$ for all $i$. Similarly, we have $\Fhat^N(x) =  \tilde{f}(x) + \widehat{\kappa}^N$ where $\widehat{\kappa}_i^N := \CVaRhat_\alpha^N[g_i(u)]$. By assumption,
	\begin{align}\label{eq:kappa-bound}
		\sup_{x \in \XX} \norm{\Fhat^N(x) - F(x)} = \norm{\widehat{\kappa} - \kappa} \le \eps.
	\end{align}
	By the definition of the solution of a VI, for any $x^* \in \SOL(\XX,F)$ and $\xhat^N \in \SOL(\XX,\Fhat^N)$, we have $(\xhat^N - x^*)^\top F(x^*) \ge 0$ and $(x^* - \xhat^N)^\top \Fhat^N(\xhat^N) \ge 0$. Combining these inequalities gives us $(x^* - \xhat^N)^\top (F(x^*) - \Fhat^N(\xhat^N)) \le 0$. Using the separable forms of the functions, we get
	\begin{align*}
		(x^* - \xhat^N)^\top (\tilde{f}(x^*) - \tilde{f}(\xhat^N)) \le (x^* - \xhat^N)^\top (\widehat{\kappa} - \kappa).
	\end{align*}
	Using strong monotonicity condition $(x^* - \xhat^N)^\top (\tilde{f}(x^*) - \tilde{f}(\xhat^N)) \ge \sigma \norm{x^* - \xhat^N}^2$ and the Cauchy-Schwartz inequality in the above expression, we get
	\begin{align*}
		\sigma \norm{x^* - \xhat^N}^2 \le \norm{x^* - \xhat^N} \norm{\widehat{\kappa} - \kappa}.
	\end{align*}
	The proof now follows from~\eqref{eq:kappa-bound}.
\end{proof}
The above result can be used to further refine the convergence rate given in Proposition~\ref{pr:exp-sep}. In the following section, we apply our results to the uncertain network routing problem.

\section{Application: Computing CVaR-based Wardrop equilibrium}

Consider a network given by a directed graph $\GG := (\VV, \EE)$, where $\VV$ and $\EE \subseteq \VV \times \VV$ stand for the set of nodes and edges, respectively.  The sets of origin and destination nodes \footnote{A \myemphc{source} is a vertex with no incoming edge and a \myemphc{sink} is a vertex with no outgoing edge.} are the sets of sources and sinks in the network, and are denoted by $\OO \subset \VV$ and $\DD \subset \VV$, respectively. The set of origin-destination (OD) pairs is $\WW \subseteq \OO \times \DD$. Let $\PP_w$ denote the set of available paths for the OD pair $w \in \WW$ and let $\PP = \cup_{w \in \WW} \PP_w$ be the set of all paths \footnote{
	A \myemphc{path} is an ordered sequence of unique vertices such that two subsequent vertices form an edge.}.  Consider the setting of nonatomic routing where numerous agents traverse the network and so each individual agent's action has infinitesimal impact on the aggregate traffic flow. As a consequence, flow is modeled as a continuous variable. Each agent is associated with an OD pair $w \in \WW$ and is allowed to select any path $p \in \PP_w$.  The route choices give rise to the aggregate traffic which is modeled as a flow vector $h \in \realnonnegative^{\abs{\PP}}$ with $h_p$ being the flow on a path $p \in \PP$. The flow between each OD pair must satisfy the travel demand. We denote the demand for the OD pair $w \in \WW$ by $d_w \in \realnonnegative$ and the set of feasible flows by
\begin{equation}\label{eq:H}
	\HH := \Bigl\{h \in \realnonnegative^{\abs{\PP}} \Big| \sum_{p \in \PP_w} h_p = d_w \text{ for all } w \in \WW \Bigr\}.
\end{equation}
Agents who choose path $p \in \PP$ experience a non-negative uncertain cost denoted by $\map{C_p}{\realnonnegative^{\abs{\PP}} \times \real^m}{\realnonnegative}$, $(h,u) \mapsto C_p(h,u)$, where $u \in \real^m$ models the uncertainty. Let $\Pb$ and $\UU \subset \real^m$ be the distribution and support of $u$, respectively. Assume that $\UU$ is compact. For the cost function, assume that for every $p \in \PP$ and $u \in \UU$, the function $h \mapsto C_p(h,u)$ is continuous. For every $p \in \PP$ and $h \in \HH$, the function $u \mapsto C_p(h,u)$ is measurable. 

In addition, for all $p \in \PP$, assume that $C_p$ takes finite value over $\HH \times \UU$. The above described elements collectively represent an uncertain routing game.  To assign an appropriate objective for agents, we assume that agents are risk-averse and look for paths with least CVaR. We assume that all agents have the same risk-aversion characterized by the parameter $\alpha \in (0,1)$. The $\CVaR$ associated to path $p$ as a function of the flow is
\begin{equation}\label{eq:cvar-true-cp-we}
	\CVaR^\Pb_\alpha[C_p(h,u)] = \inf_{t \in \real} \Bigl\{ t + \frac{1}{\alpha} \Eb_\Pb \left[ C_p(h,u)-t \right]_+ \Bigr\}.
\end{equation}
The notion of equilibrium then is that of Wardrop~\cite{JRC-NSM:11}, where the cost associated to a path is its $\CVaR$.
\begin{definition}\longthmtitle{Conditional value-at-risk based Wardrop equilibrium (CWE)}\label{def:cwe}
	A flow vector $h^* \in \realnonnegative^{\abs{\PP}}$ is called a \myemphc{CVaR-based Wardrop equilibrium (CWE)} for the uncertain routing game  if: (i) $h^*$ satisfies the demand for all OD pairs and (ii) for any OD pair $w$, a path $p \in \PP_w$ has nonzero flow if the CVaR of path $p$ is minimum among all paths in $\PP_w$. Formally, $h^*$ is a CWE if $h^* \in \HH$ and $h_p^* > 0$ for $p \in \PP_w$ only if
	\begin{align}\label{eq:CVaR-eq-regret}
		\CVaR^\Pb_\alpha [C_p(h^*,u)] \le \CVaR^\Pb_\alpha [C_q(h^*,u)], \quad \forall q \in \PP_w.
	\end{align}
	We denote the set of CWE by $\SSCWE \subset \HH$.  \oprocend
\end{definition}
One can verify that the set $\SSCWE$ is equivalent to the set of solutions to the variational inequality (VI) problem $\VI(\HH,G)$ (see Section~\ref{sec:prelims} for relevant notions)~\cite{MJS:79}, where
\begin{align*}
	G_p(h) := \CVaR^\Pb_\alpha [C_p(h,u)],
\end{align*}
for all $p \in \PP$. Note that the set $\HH$ is compact and convex. Further, the map $h \mapsto G(h)$ is continuous since $C_p$, $p \in \PP$ are so and $\HH$ and $\UU$ are compact. Therefore, the set of solutions $\SOL(\HH,G)$ is nonempty and compact~\cite[Corollary 2.2.5]{FF-JSP:03}.  Consequently, the set $\SSCWE$ is nonempty and compact. Due to this connection between the CWE and the solution of the risk-based VI, we can apply the results developed in the previous section to study the sample average approximation to the CWE. We present the following main result. Given $N$ i.i.d samples of $u$, the sample average approximation of the function $G$ is defined component-wise as $\Ghat^N_p (h) := \CVaRhat^N_\alpha [C_p(h,u)]$, where 
\begin{align*}
	\CVaRhat^N_\alpha [C_p(h,u)] := \inf_{t \in \real} \Bigl\{t + \frac{1}{N \alpha} \sum_{i=1}^N [C_p (h,\uhat^i) - t]_+ \Bigr\}.
\end{align*}
We denote the set of solutions of the $\VI(\HH,\Ghat^N)$ by $\SSCWEhat^N$. We have the guarantee:

\begin{theorem}\longthmtitle{Convergence of $\SSCWEhat^N$ to $\SSCWE$}\label{th:exp-conv-we}
	The following hold:
	\begin{enumerate}
		\item \label{cond:eps-delta} For any $\eps > 0$, there exists $\delta(\eps) > 0$ such that $\Db(\SSCWEhat^N, \SSCWE) \le \eps$ whenever $\sup_{h \in \HH} \norm{\Ghat^N(h) - G(h)} \le \delta(\epsilon)$.
		\item Almost surely $\Db(\SSCWEhat^N,\SSCWE) \to 0$.
	\end{enumerate}
	Assume there exists a constant $M > 0$ such that
	\begin{align}\label{eq:lips-psi-we}
		\abs{C_p(h,u) - C_p(h',u)} \le M \norm{h - h'},
	\end{align}
	for all $h, h' \in \HH$, $u \in \UU$, and $p \in \PP$. Let 
	\begin{align*}
		\ell & := \min \setdef{C_p(h,u)}{h \in \HH, u \in \UU, p \in \PP},
		\\
		L & := \max \setdef{C_p(h,u)}{h \in \HH, u \in \UU, p \in \PP}.
	\end{align*}
	Then, for any $\eps > 0$, and $N \in \naturalnumbers$, the following inequality holds
		\begin{align*}
			\Pb^N \bigl(\Db(\SSCWEhat^N, \SSCWE) \le \eps \bigr) \ge 1- \gamma(\delta(\eps)) e^{-\beta(\delta(\eps))N},
		\end{align*}	
		where $\delta(\eps)$ satisfies the condition given in~\ref{cond:eps-delta}, and
		\begin{subequations}\label{eq:g-b-we}
			\begin{align}
				\gamma(\eps) & :=  6 \abs{\PP} \prod_{w \in \WW} \ceil*{\frac{4 M \abs{\WW} \sqrt{\abs{\PP_w}}}{\eps \alpha}}, \label{eq:gamma-we} 
				\\
				\beta(\eps) & := \frac{\alpha \eps^2}{44 \abs{\PP} (L-\ell)^2} . \label{eq:beta-we}
			\end{align}
		\end{subequations}
\end{theorem}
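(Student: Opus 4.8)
The plan is to read this theorem as the specialization of the general SAA theory to the VI $\VI(\HH,G)$, under the dictionary $\XX \leftrightarrow \HH$, $F \leftrightarrow G$, $f_i \leftrightarrow C_p$, and $n \leftrightarrow \abs{\PP}$, and then to sharpen only the covering-number estimate by exploiting the product structure of $\HH$.

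First I would dispatch the two qualitative claims. Claim~\ref{cond:eps-delta} is exactly Lemma~\ref{le:cont-sol-set} applied to $\VI(\HH,G)$: since $\SSCWE = \SOL(\HH,G)$ and $\SSCWEhat^N = \SOL(\HH,\Ghat^N)$, as recorded just before the statement, the sensitivity lemma furnishes the required $\delta(\eps)$. The almost-sure convergence $\Db(\SSCWEhat^N,\SSCWE) \to 0$ follows from Theorem~\ref{th:asymptotic}, whose hypotheses---continuity of each $C_p$ in $h$, measurability in $u$, finiteness of $C_p$ on $\HH \times \UU$, and compactness of $\HH$ and $\UU$---all hold by the standing assumptions of the routing model.

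The substance is the exponential bound, which I would obtain by retracing Proposition~\ref{pr:exp-conv-F} and Theorem~\ref{th:exp-conv} in this setting. Under the Lipschitz hypothesis~\eqref{eq:lips-psi-we}, Lemma~\ref{le:CVaR-Lip} gives that $h \mapsto \CVaRhat^N_\alpha[C_p(h,u)]$ and its population counterpart are Lipschitz with constant $M/\alpha$, which fixes the required cover radius $\eps\alpha/(4M)$ as in~\eqref{eq:cover-cond}. Combining the pointwise concentration~\eqref{eq:pw-CVaR-bound} over the support $[\ell,L]$, a union bound over the $\abs{\PP}$ paths and over the cover points, and the $\eps/\sqrt{\abs{\PP}}$ reduction coming from the Euclidean norm, reproduces the exponent $\beta(\eps) = \alpha\eps^2/(44\abs{\PP}(L-\ell)^2)$ and a prefactor $6\abs{\PP}K'$, where $K'$ is the covering number of $\HH$. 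Replacing $\eps$ by $\delta(\eps)$ from the first claim then yields the stated inequality via Theorem~\ref{th:exp-conv}.

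The key departure, and the step I expect to be the main obstacle, is the estimate of $K'$. Applying the generic volumetric bound of Lemma~\ref{le:gen-cover} to all of $\HH$ would reintroduce the poor $(\,\cdot\,)^{\abs{\PP}}$ scaling of~\eqref{eq:gamma}; instead I would exploit that, since the path sets $\{\PP_w\}_{w\in\WW}$ partition $\PP$, the feasible set factors as a Cartesian product $\HH = \prod_{w \in \WW}\Delta_w$ of scaled simplices $\Delta_w = \setdef{h^w \in \realnonnegative^{\abs{\PP_w}}}{\sum_{p\in\PP_w} h_p = d_w}$ on disjoint coordinates. Distributing the target accuracy across factors through $\norm{h - \htil}^2 = \sum_{w} \norm{h^w - \htil^w}^2$, it suffices to cover each $\Delta_w$ to accuracy $\eps\alpha/(4M\sqrt{\abs{\WW}})$ and take the product of the factor covers, so that $K' = \prod_{w\in\WW} N_w$ with $N_w$ the covering number of the single simplex $\Delta_w$. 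Bounding each $N_w$ by the per-factor count $\ceil*{4M\abs{\WW}\sqrt{\abs{\PP_w}}/(\eps\alpha)}$---carefully tracking the simplex diameter through the $\sqrt{\abs{\PP_w}}$ factor and the $\sqrt{\abs{\WW}}$ arising from the accuracy split---is what produces the product form of $\gamma$ in~\eqref{eq:gamma-we}; this simplex count, together with keeping the constants consistent with~\eqref{eq:beta-we}, is where I expect the bookkeeping to be most delicate.
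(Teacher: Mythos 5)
Your proposal matches the paper's proof, which simply invokes Theorems~\ref{th:asymptotic} and~\ref{th:exp-conv} with $\XX = \HH$, $F = G$, $f_i = C_p$, $n = \abs{\PP}$, and replaces the generic volumetric covering bound of Lemma~\ref{le:gen-cover} by the product-of-simplices bound of Lemma~\ref{le:cover-H}. The only (harmless) deviation is your Euclidean split of the accuracy across OD pairs, giving $\eps\alpha/(4M\sqrt{\abs{\WW}})$ per factor where the paper's Lemma~\ref{le:cover-H} uses the cruder triangle-inequality split $\eps/\abs{\WW}$; this only shrinks the cover and still yields the stated $\gamma$ and $\beta$.
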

The proof follows in the same way as that of Theorem~\ref{th:asymptotic} and~\ref{th:exp-conv}, using the fact that the covering number of $\HH$ can be bounded as given in Lemma~\ref{le:cover-H} in the appendix. This sharper bound on covering number brings out a notable difference in the constants given in the exponential bound~\eqref{eq:gamma-we} and~\eqref{eq:beta-we} as compared to those derived in Theorem~\ref{th:exp-conv}.

\section{Numerical Example: Sioux Falls}\label{sec:sioux}
Here we illustrate the method of sample average approximation for the computation of the CWE through an example. We consider the Sioux Falls traffic network that consists of $24$ nodes and $76$ edges~\cite{TNRCT}. The (deterministic) cost associated to each edge $e \in \EE$ of the network is the travel time and is given as an affine function of the flow on the edge,
\begin{align*}
	f_e(\ell_e) := t_e \Bigl( 1 + b_e \frac{\ell_e}{c_e} \Bigr),
\end{align*}
where $\ell_e$ is the flow on edge $e \in \EE$, $t_e$ is the free-flow travel time, and $c_e$ is the capacity of the edge. The values for constants $t_e$ and $c_e$ are taken from the repository~\cite{TNRCT}. The constant $b_e$ is fixed to be $100$ for all edges. For simplicity, we consider three OD pairs $\WW = \{(1,19),(13,8),(12,18)\}$ and for each pair, we choose $10$ paths that have the shortest free-flow travel time. The demand is given as $d_{(1,19)} = 300$, $d_{(13,8)} = 600$, and $d_{(12,18)} = 200$. The cost of each path is set to be the summation of the costs of the edges contained in it. That is, for some path $p \in \PP$,
\begin{align*}
	C_p(h) = \sum_{e \in p} f_e(\ell_e),
\end{align*}
where the summation is over all edges that constitute the path. Note that the flow on any edge is the sum of the flow of the paths that use that edge. That is, 
\begin{align}\label{eq:edge-path}
	\ell_e = \sum_{\setdef{p \in \PP}{e \in p}} h_p.
\end{align}
We assume that the cost associated to each edge that contains either of the nodes $10$, $16$, or $17$ is uncertain. Specifically, for such an edge $e$, the uncertain cost is given as 
\begin{align*}
	J_e(\ell_e,u_e) = f_e(\ell_e) + u_e,
\end{align*}
where $u_e$ has uniform distribution over the set $[0,0.5 t_e]$. For all other edges, we set $u_e$ to be zero with probability one. All uncertainties are assumed to be mutually independent. The uncertain cost of a path $p \in \PP$ is given as
\begin{align*}
	C_p(h,u) = \sum_{e \in p} J_e(\ell_e,u_e).
\end{align*}
This defines completely the routing game with uncertain costs.

\subsection{Affine Separable Costs and LCP}
In the example explained above, cost $C_p$ is linear in flow and the uncertainty is additive. Therefore, solving the sample average VI is equivalent to solving a linear complementarity problem (LCP), which in turn is a convex optimization problem with quadratic cost and affine constraints. We next drive this optimization problem. Let $Q \in \{0,1\}^{\abs{\EE}\times \abs{\PP}}$ be the (edge, path)-incidence matrix where $Q_{ep}$ entry is $1$ if and only if edge $e$ belongs to path $p$. Then, using~\eqref{eq:edge-path}, the vector containing all edge flows can be written as $\ell = Q h$, where $h$ consists of flows on paths. Stacking all uncertain edge costs $J_e$ in a vector $J$ and using its affine separable form, we obtain
\begin{align*}
	J(h,u) = R Q h + t + u,
\end{align*}
where $u$ and $t$ are vector of uncertainties and free-flow travel time of all edges, respectively, and $R$ is a diagonal matrix where the diagonal entry corresponding to edge $e$ is $b_e t_e/ c_e$. Using the above relation, the vector of costs incurred on paths takes the form
\begin{align*}
	C(h,u) = Q^\top R Q h + Q^\top t + Q^\top u.
\end{align*}
Further, since $\CVaR$ is shift-invariant, we obtain
\begin{align*}
	G(h) = Q^\top R Q h + Q^\top t + \CVaR^\Pb_\alpha[Q^\top u],
\end{align*}
where the last term in the above relation is the vector of element-wise $\CVaR$s. Similarly, the sample average approximation of $G$ is given as
\begin{align*}
	\Ghat^N(h) = Q^\top R Q h + Q^\top t + \CVaRhat_\alpha^N[Q^\top u].
\end{align*}
Our aim is find the solution of $\VI(\HH,\Ghat)$. To represent the set of feasible flows in a compact form, denote $B \in \{0,1\}^{\abs{\WW} \times \abs{\PP}}$ as the (OD pair, path)-incidence matrix where $B_{wp}$ is $1$ if and only if $p \in \PP_w$. Denoting the vector of demands as $d = (d_w)_{w \in \WW}$, the set of feasible flows are vectors $h \ge 0$ satisfying $B h = d$. Using this notation and the explanation given in~\cite[Section 2.2]{YX-UVS:16}, finding the solution of $\VI(\HH,\Ghat^N)$ is equivalent to solving the LCP given as: find $x = (h;v) \in \real^{\abs{\PP}+\abs{\WW}}$ such that 
\begin{align*}
	x \ge 0, \quad \Mhat^N(x) \ge 0, \quad  \text{and }  \, \, x^\top \Mhat^N(x) = 0,
\end{align*}
where 
\begin{align*}
	\Mhat^N(x) := \begin{bmatrix} Q^\top R Q & -B^\top \\ B & 0 \end{bmatrix} x + \begin{bmatrix} Q^\top t + \CVaRhat_\alpha^N[Q^\top u]  \\ -d \end{bmatrix}.
\end{align*}
This LCP can be equivalently solved by finding the optimizer of the following problem
\begin{equation}\label{eq:quad}
	\begin{aligned}
		\minimize & \quad x^\top \Mhat^N (x)
		\\
		\st & \quad \Mhat^N (x) \ge 0,
		\\
		& \quad x \ge 0.
	\end{aligned}
\end{equation}
Since $\Mhat^N$ is affine in $x$, the above problem is quadratic and one can show using the properties of matrices $Q$, $R$, and $B$, that the problem is convex. To summarize, the $\VI(\HH,G)$ can be approximated by $\VI(\HH,\Ghat^N)$ and the latter can be solved by first finding $\CVaRhat_\alpha^N[Q^\top u]$ and then solving~\eqref{eq:quad}.

\subsection{Computing CWE}
For the above explained Sioux falls example, we set $\alpha = 0.05$. This defines uniquely the CWE $h^*$. For the sample average approximation, we consider three scenarios with different number of samples, $N \in \{50, 500, 5000\}$. We consider $500$ runs for each of the scenarios. Each run collects $N$ number of i.i.d samples of the uncertainty $u$, constructs the empirical $\CVaRhat_\alpha^N[Q^\top u]$, and computes the approximation of the CWE $\hhat^N$ by solving~\eqref{eq:quad}. Figure~\ref{fig:combined-cdf-sioux} illustrates our results. It plots the cumulative distribution function of the random variable $\norm{\hhat^N - h^*}$ as estimated using the $500$ runs. Note that
the complete distribution moves to the left with increasing number of samples. This confirms our theoretical findings that as $N$ increases, the approximate solution $\hhat^N$ approaches the CWE almost surely.
\begin{figure}
	\centering
	\includegraphics[width=0.73\linewidth]{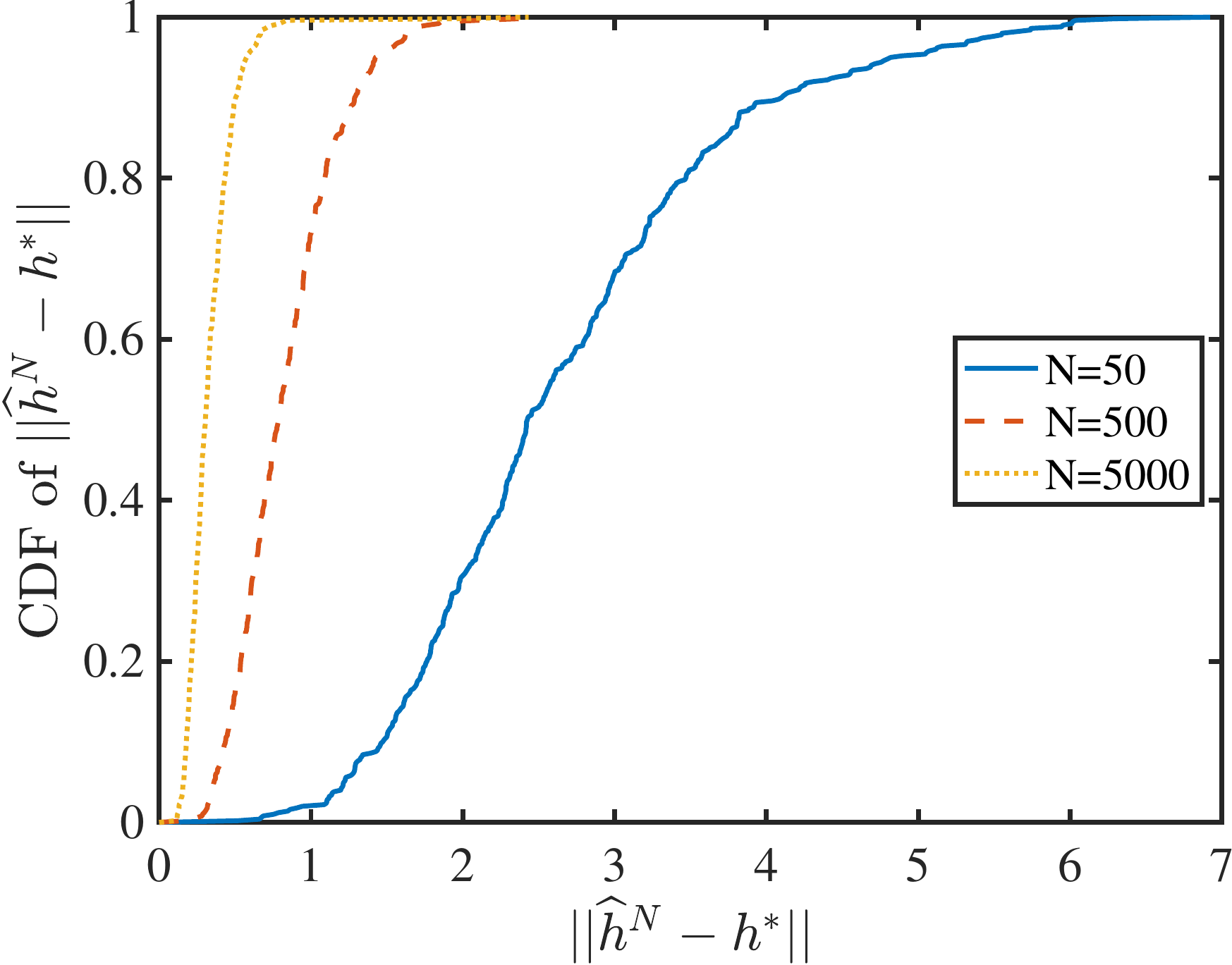}
	\caption{Plot illustrating the the convergence of the approximate
		solution $\hhat^N$ to the CVaR-based Wardrop equilibrium $h^*$ for Sioux falls network, see Section~\ref{sec:sioux} for
		details. Each line is the cumulative distribution of $\norm{\hhat^N-h^*}$ with a different sample size used for the approximation. The cdf is obtained using $500$ runs. 
	}
	\label{fig:combined-cdf-sioux}
	\vspace*{-2ex}
\end{figure}

\section{Conclusions}
We considered a risk-based variational inequality and studied the sample average approximation method for solving it. In particular, we derived asymptotic consistency and exponential convergence under suitable assumptions. For the case of separable random function, we derived sharper convergence bounds. Lastly we demonstrated the application of our result in the case of uncertain network routing problem where one can determine the CVaR-based Wardrop equilibrium using the sample average scheme. Future work will involve exploring tractability of the resulting sample average VI under various conditions. We also wish to investigate other efficient sampling techniques, especially when the dimension of the problem is large. Finally, we plan to investigate decentralized learning methods for finding the solution of risk-averse VIs.

\section{Appendix}

Here, we estimate the covering number of a general set $\XX$ and the feasible flow set $\HH$ related to the network routing problem. This computation helps in establishing Proposition~\ref{pr:exp-conv-F} and Theorem~\ref{th:exp-conv-we}. In order to present the results, we need a couple of definitions. 
\begin{definition}\longthmtitle{Covering number~\cite[Chapter 3]{VT:11}}\label{def:cover}
	Given a set $\XX \subset \real^n$ and a real value $\eps > 0$, a set of $m \in \naturalnumbers$ points $\{x_1, x_2, \dots, x_m\}$ is called an \emph{$\eps$-cover} of $\XX$ if $\XX \subset \cup_{k=1}^m B(x_k,\eps)$, where $B(x,\eps)$ is the closed ball in Euclidean metric with center as $x$ and radius $\eps$. The minimum number of points required to form an $\eps$-cover of $\XX$ is called  the \emph{$\eps$-covering number}. 
	\oprocend
\end{definition}

From~\cite{YW:14-lecture}, we have the following result. We give the proof here for the sake of completeness. 

\begin{lemma}\longthmtitle{Covering number of a set}\label{le:cover-convex}
	The $\eps$-covering number of a convex set $\XX \subset \real^n$ that satisfies $\eps B \subset \XX$ is upper bounded by $\Bigl(\frac{3}{\eps} \Bigr)^n \frac{\vol(\XX)}{\vol( B)}$, where $\vol$ stands for volume, $B$ is the unit norm ball in $\real^n$, and the operator $+$ represents the Minkowski sum.
\end{lemma}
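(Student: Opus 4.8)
The plan is to use the classical volumetric packing argument. First I would select a \emph{maximal} $\eps$-separated subset $\{x_1, \dots, x_m\} \subset \XX$, that is, a collection of points satisfying $\norm{x_k - x_j} > \eps$ for all $k \ne j$ that cannot be enlarged without destroying this property. By maximality every $x \in \XX$ must lie within distance $\eps$ of some $x_k$, since otherwise $x$ could be appended to the collection. Hence $\{x_1, \dots, x_m\}$ is itself an $\eps$-cover of $\XX$, the $\eps$-covering number is at most $m$, and it suffices to bound $m$ from above.

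The second step is to convert separation into a volume inequality. Because the centers are pairwise more than $\eps$ apart, the closed balls $B(x_k, \eps/2)$ are pairwise disjoint, so their volumes add. Moreover, each ball $B(x_k, \eps/2)$ is a translate of $(\eps/2)B$ by a point of $\XX$, hence is contained in the Minkowski sum $\XX + (\eps/2) B$. Combining these two observations gives the estimate $m\,(\eps/2)^n \vol(B) = m \vol((\eps/2)B) \le \vol\bigl(\XX + (\eps/2)B\bigr)$.

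The crucial step, and the one I expect to be the main obstacle, is to absorb the Minkowski sum back into $\XX$ using the hypothesis $\eps B \subset \XX$ together with convexity. Since $0 \in \eps B \subset \XX$, the set $\XX$ contains the origin, and for a convex set containing the origin one has $a \XX + b \XX = (a+b)\XX$ for all $a, b \ge 0$. Rescaling the hypothesis yields $(\eps/2)B = \frac{1}{2}(\eps B) \subset \frac{1}{2}\XX$, and combining this containment with the scaling identity gives $\XX + (\eps/2)B \subset \XX + \frac{1}{2}\XX = \frac{3}{2}\XX$, so that $\vol\bigl(\XX + (\eps/2)B\bigr) \le (3/2)^n \vol(\XX)$. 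Substituting this into the volume inequality of the previous step produces $m\,(\eps/2)^n \vol(B) \le (3/2)^n \vol(\XX)$, and solving for $m$ gives the claimed bound $m \le (3/\eps)^n \vol(\XX)/\vol(B)$. The only genuinely delicate point is the verification of the scaling identity $\XX + \frac{1}{2}\XX = \frac{3}{2}\XX$, which relies essentially on convexity and $0 \in \XX$; the remaining manipulations are routine bookkeeping with volumes and the scaling $\vol((\eps/2)B) = (\eps/2)^n \vol(B)$.
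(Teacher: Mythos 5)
Your proposal is correct and follows essentially the same route as the paper: bound the covering number by the packing number via a maximal separated set, compare volumes of disjoint $\eps/2$-balls inside $\XX + (\eps/2)B$, and absorb the Minkowski sum into $(3/2)\XX$ using $\eps B \subset \XX$ and convexity. (Minor remark: the inclusion $\XX + \tfrac{1}{2}\XX \subset \tfrac{3}{2}\XX$ needs only convexity, not $0 \in \XX$.)
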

\begin{proof}
	Note that the $\eps$-covering number is bounded above by the $\eps$-packing number of the set~\cite[Chapter 3]{VT:11}. The latter is defined as the maximum number of points that can be selected from $\XX$ such that they are mutually more than $\eps$ distance apart. Let $\{x_1, x_2, \dots, x_P\} \subset \XX$ be these points and $P$ be the $\eps$-packing number. Our next step is to derive a bound for $P$. Note that by definition of packing, closed balls $B(x_i,\eps/2) := \setdef{y \in \real^n}{\norm{x_i - y} \le \eps/2}$, $i \in \{1,\dots,P\}$ are disjoint and $\cup_{i=1}^P B(x_i,\eps/2) \subset \XX+(\eps/2) B$, where the set addition is considered to be the Minkowski sum. Taking the volume on both sides yields
	\begin{align*}
		\vol(\XX + (\eps/2) B) \ge \vol \Bigl( \cup_{i=1}^P B(x_i,\eps/2) \Bigr) = P \vol ( (\eps/2)B).
	\end{align*}
	This implies $P \le \frac{\vol(\XX + (\eps/2) B)}{\vol( (\eps/2) B)}$. Next we wish to show that 
	\begin{align}\label{eq:subset}
		\XX + (\eps/2) B \subset (3/2) \XX
	\end{align} 
	under our hypothesis. First note that if $x \in \XX+ (\eps/2) B$, then there exists $y \in \XX$ and $z \in (\eps/2) B$ such that $x = y + z$. By assumption, $\eps B \subset \XX$ and so $z \in (1/2) \XX$. This implies that $x \in \XX+ (1/2) \XX$. Thus, $\XX + (\eps/2) B \subset \XX + (1/2) \XX$. Next using convexity one can show that $\XX + (1/2) \XX \subset (3/2) \XX$. Indeed, pick any $x \in \XX + (1/2) \XX$, we have $y,z \in \XX$ such that $x = y+ (1/2) z$. That is, $\frac{2}{3} x = \frac{2}{3} y + \frac{1}{3} z$. Using convexity we get $\frac{2}{3} x \in \XX$ and so $x \in \frac{3}{2} \XX$. This establishes~\eqref{eq:subset}. Using this inclusion we get $\vol(\XX + (\eps/2) B) \le \vol((3/2) \XX)$. Finally, substituting this in the bound on $P$, we have
	\begin{align*}
		P \le \frac{\vol(\XX + (\eps/2) B)}{\vol( (\eps/2) B)} \le \frac{ \vol( (3/2) \XX) }{\vol( (\eps/2) B)} = \Bigl(\frac{3}{\eps} \Bigr)^n \frac{\vol(\XX)}{\vol( B)}.
	\end{align*}
	This completes the proof.
\end{proof}

The following is an application of the above result.
\begin{lemma}\longthmtitle{Covering number of a compact  set}\label{le:gen-cover}
	The $\eps$-covering number of a compact set $\XX \subset \real^n$, where $\eps \le \diam(\XX)/2$, is upper bounded by $\Bigl(\frac{3 \diam(\XX)}{\eps} \Bigr)^n \frac{1}{\vol( B)}$, where $\vol(B)$ is the volume of the unit norm ball $B$ in $\real^n$ and $\diam(\XX) = \sup_{x, x' \in \XX} \norm{x-x'}$ is the diameter of $\XX$. 
\end{lemma}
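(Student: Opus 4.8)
The goal is to bound the $\eps$-covering number of a compact set $\XX \subset \real^n$ for $\eps \le \diam(\XX)/2$. My plan is to reduce this statement to the previously established Lemma~\ref{le:cover-convex}, which handles convex sets $\XX$ satisfying the normalization $\eps B \subset \XX$. The difficulty is that the present $\XX$ is merely compact, not convex, and there is no assumption that it contains a ball of radius $\eps$ around the origin. So I must bridge these two gaps: convexity and the $\eps B \subset \XX$ normalization.

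First I would pass from $\XX$ to a convex set containing it. The natural candidate is the convex hull $\operatorname{conv}(\XX)$, or more simply a Euclidean ball $\widetilde{\XX} := B(x_0, \diam(\XX))$ of radius $\diam(\XX)$ centered at some point $x_0 \in \XX$. Since any two points of $\XX$ are at distance at most $\diam(\XX)$, we have $\XX \subset \widetilde{\XX}$, and this enclosing ball is convex. An $\eps$-cover of $\widetilde{\XX}$ is in particular an $\eps$-cover of $\XX$, so its covering number dominates that of $\XX$. This handles convexity, and it also makes the normalization easy to check: because $\eps \le \diam(\XX)/2 \le \diam(\XX)$, the ball $\eps B$ is contained in $\widetilde{\XX}$ after recentering at $x_0$ (translating so that $x_0$ is the origin leaves covering numbers unchanged). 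Thus $\widetilde{\XX}$ satisfies the hypothesis $\eps B \subset \widetilde{\XX}$ of Lemma~\ref{le:cover-convex}.

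Next I would apply Lemma~\ref{le:cover-convex} directly to $\widetilde{\XX}$ to obtain the bound $\bigl(\tfrac{3}{\eps}\bigr)^n \tfrac{\vol(\widetilde{\XX})}{\vol(B)}$ on the $\eps$-covering number. The remaining step is purely a volume computation: since $\widetilde{\XX}$ is a ball of radius $\diam(\XX)$, we have $\vol(\widetilde{\XX}) = \diam(\XX)^n \vol(B)$ by scaling of volume under dilation in $\real^n$. Substituting this gives
\begin{align*}
	\Bigl(\frac{3}{\eps}\Bigr)^n \frac{\diam(\XX)^n \vol(B)}{\vol(B)} = \Bigl(\frac{3 \diam(\XX)}{\eps}\Bigr)^n,
\end{align*}
and I would then rewrite this as $\bigl(\tfrac{3\diam(\XX)}{\eps}\bigr)^n \tfrac{1}{\vol(B)} \cdot \vol(B)$ to match the stated form, or more carefully track the factor of $\vol(B)$ to land exactly on the claimed expression $\bigl(\tfrac{3\diam(\XX)}{\eps}\bigr)^n \tfrac{1}{\vol(B)}$ by using the normalized-volume version of the preceding lemma.

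The main obstacle I anticipate is bookkeeping the $\vol(B)$ factor so that the final constant matches the target exactly. Depending on whether the enclosing set is taken as a radius-$\diam(\XX)$ ball or some tighter convex body, the volume ratio shifts, and one must be careful that the hypothesis $\eps \le \diam(\XX)/2$ (rather than just $\eps \le \diam(\XX)$) is what guarantees the normalization and keeps the bound meaningful. Conceptually, however, the argument is routine: enclose $\XX$ in a convex ball, invoke the convex covering lemma, and evaluate the volume of a Euclidean ball. No genuine difficulty beyond constant-chasing should arise.
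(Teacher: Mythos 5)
Your overall strategy---enclose $\XX$ in a convex body, check the normalization $\eps B \subset$ (body), and invoke Lemma~\ref{le:cover-convex}---is exactly the paper's, but your choice of enclosing body creates a genuine gap in the constant. A ball of radius $\diam(\XX)$ has volume $\diam(\XX)^n \vol(B)$, so Lemma~\ref{le:cover-convex} applied to it yields the bound $\bigl(\tfrac{3}{\eps}\bigr)^n \tfrac{\diam(\XX)^n \vol(B)}{\vol(B)} = \bigl(\tfrac{3\diam(\XX)}{\eps}\bigr)^n$, which is \emph{larger} than the claimed bound $\bigl(\tfrac{3\diam(\XX)}{\eps}\bigr)^n \tfrac{1}{\vol(B)}$ by the factor $\vol(B)$, and $\vol(B) > 1$ for all $n \le 12$. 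Your suggestion to ``rewrite'' $\bigl(\tfrac{3\diam(\XX)}{\eps}\bigr)^n$ as $\bigl(\tfrac{3\diam(\XX)}{\eps}\bigr)^n \tfrac{1}{\vol(B)} \cdot \vol(B)$ does not close this gap: it is an identity, not an inequality in the needed direction, so in low dimensions you have not proved the stated lemma.

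The fix, which is what the paper does, is to take the enclosing convex body to be the cube $\mathcal{M} := [-\diam(\XX)/2, \diam(\XX)/2]^n$ rather than a ball. Since the projection of $\XX$ onto each coordinate axis has length at most $\diam(\XX)$, a translate of $\mathcal{M}$ contains $\XX$, and translation does not change covering numbers. The hypothesis $\eps \le \diam(\XX)/2$ gives $\eps B \subset \mathcal{M}$, and crucially $\vol(\mathcal{M}) = \diam(\XX)^n$ with no extra factor of $\vol(B)$, so Lemma~\ref{le:cover-convex} delivers exactly $\bigl(\tfrac{3\diam(\XX)}{\eps}\bigr)^n \tfrac{1}{\vol(B)}$. (Note also that replacing the ball by $\operatorname{conv}(\XX)$ would not rescue your version: the convex hull may fail to contain any $\eps$-ball, e.g.\ when $\XX$ is lower-dimensional.)
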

\begin{proof}
	Consider the set $\mathcal{M} := [-\diam(\XX)/2,\diam(\XX)/2]^n$, where $\diam(\XX)$ is the diameter of the set $\XX$. One can verify that the covering number of $\XX$ is upper bounded by that of the set $\mathcal{M}$. This is because the set $\XX$ can be entirely contained in $\mathcal{M}$ after performing a translation operation. Note that $\vol(\mathcal{M}) = \diam(\XX)^n$. The result then follows from Lemma~\ref{le:cover-convex}.
\end{proof}

Next, we provide a bound on the covering number of a simplex. In the consequent result, we use this bound to analyze the covering number of the feasible flow set $\HH$.
\begin{lemma}\longthmtitle{Covering number for a simplex}\label{le:cover}
	For the simplex $\Delta_d^n := \setdef{x \in \realnonnegative^n}{\sum_{i=1}^n x_i = d}$, the $\eps$-covering number is  bounded above by
	\begin{align}\label{eq:choose}
		\Bigl( \begin{matrix} n + K -1 \\ K-1 \end{matrix} \Bigr),
	\end{align}
	where $K = \ceil*{\frac{\sqrt{n} d}{\eps}}$. 
\end{lemma}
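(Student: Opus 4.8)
The plan is to bound the $\eps$-covering number of the simplex $\Delta_d^n$ by constructing an explicit cover consisting of grid points whose coordinates are multiples of $d/K$, and then to count these points via a standard stars-and-bars argument. First I would introduce the discretization parameter $K = \ceil*{\frac{\sqrt{n}d}{\eps}}$ and define the candidate cover to be the set of points $x \in \Delta_d^n$ all of whose coordinates lie in $\{0, d/K, 2d/K, \dots, Kd/K\}$; equivalently, writing $x_i = k_i \, d/K$ with $k_i \in \{0,1,\dots,K\}$, the constraint $\sum_i x_i = d$ becomes $\sum_{i=1}^n k_i = K$. Counting the nonnegative integer solutions of this equation is exactly the stars-and-bars count, which gives $\binom{n+K-1}{K-1}$ points, matching the claimed bound~\eqref{eq:choose}.

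The substance of the argument is the covering property: I must show that every $x \in \Delta_d^n$ lies within Euclidean distance $\eps$ of one of these grid points. Given an arbitrary $x \in \Delta_d^n$, I would round each coordinate to a nearby multiple of $d/K$ while preserving both nonnegativity and the total-sum constraint. The natural approach is to set $k_i = \lfloor K x_i / d \rfloor$ as a first guess, note that $\sum_i k_i$ then falls short of $K$ by some integer amount, and distribute the deficit by incrementing a subset of the $k_i$ by one. This produces a feasible grid point $\xtil$ for which each coordinate satisfies $\abs{x_i - \xtil_i} \le d/K$. Consequently $\norm{x - \xtil} \le \sqrt{n} \cdot d/K \le \eps$, where the last inequality is precisely the definition of $K$. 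This confirms that the grid points form an $\eps$-cover.

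The step I expect to require the most care is the rounding-with-exact-feasibility construction: the floor operation alone does not preserve the equality $\sum_i x_i = d$, so I must argue that the rounded vector can always be adjusted to satisfy $\sum_i k_i = K$ exactly while keeping each coordinate error at most $d/K$ and without introducing negative entries. The key observation making this work is that the deficit $K - \sum_i \lfloor K x_i/d\rfloor$ is a nonnegative integer strictly less than $n$, so it suffices to add one unit to that many distinct coordinates; since each such increment changes a coordinate by exactly $d/K$ and was already a downward rounding, the per-coordinate error stays bounded by $d/K$ and nonnegativity is never violated. Once this discretization lemma is in place, the volume/counting portion and the distance estimate are routine, and the stated bound follows immediately.
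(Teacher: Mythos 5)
Your proposal is correct and follows essentially the same route as the paper: the same grid of points with coordinates that are multiples of $d/K$ summing to $K$, the same stars-and-bars count, and the same rounding-down-then-redistributing-the-integer-deficit argument to establish the covering property (your explicit observation that the deficit is a nonnegative integer strictly less than $n$ is exactly what justifies the paper's choice of $y^\delta \in \{0,1\}^n$). No gaps; nothing further is needed.
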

\begin{proof}
	Consider the set of points
	\begin{align*}
		\CC:= \Bigl\{ \Bigl( \frac{i_1 d}{K}, \frac{i_2 d}{K}, \dots, \frac{i_{n} d}{K} \Bigr) \, \Big| \, i_s \in \until{K} & \cup \{0\}, \forall s \in \until{n}, \text{ and } \sum_{s=1}^n i_s = K \Bigr\},
	\end{align*}
	where $K = \ceil{\frac{\sqrt{n} d }{\eps}}$. Note that $\CC \subset \Delta_d^n$. We will show that this is a valid $\eps$-cover for $\Delta_d^n$. To this end, pick any point $x \in \Delta_d^n$. We will construct a point $x^c \in \CC$ such that $\norm{x - x^c} \le \eps$. Let $x^\upp, x^\down \in  \realnonnegative^n$ be such that each $j$-th component is given by 	
	\begin{align*}
		x^\upp_j & = \min \Bigl\{ \frac{i * d}{K} \, \Big| \, \frac{i * d}{K} \ge x_j, i \in [K] \cup \{0\} \Bigr\},
		\\
		x^\down_j & = \max \Bigl\{ \frac{i * d}{K} \, \Big| \, \frac{i * d}{K} \le x_j, i \in [K] \cup \{0\} \Bigr\}.
	\end{align*}
	Note that $x^\down \preceq x \preceq x^\upp$, where $\preceq$ denotes element-wise inequality. Further, $\sum_{j=1}^n x^\down_j \le \sum_{j=1}^n x_j = d \le \sum_{j=1}^n x^\upp_j$. By construction, for any vector $y$ satisfying $x^\down \preceq y \preceq x^\upp$, we have $\norm{y - x}_\infty \le \frac{d}{K}$. Consequently, for such a vector we have
	\begin{align}\label{eq:yx}
		\norm{y - x} \le \sqrt{n} \norm{y - x}_\infty \le \frac{\sqrt{n} d}{K} \le \eps.
	\end{align} 
	Thus, our aim is to find a vector $y$ that belongs to $\CC$ and for which $x^\down \preceq y \preceq x^\upp$ holds. Define
	\begin{align*}
		\delta = \frac{K}{d} \sum_{j=1}^n (x_j - x^\down_j).
	\end{align*}
	Note that $\delta$ is an integer as $\sum_{j=1}^n x_j = d$ and each component $x^\down_j$ is a product of an integer and the quantity $\frac{d}{K}$. Now consider a vector $y^\delta \in \{0,1\}^n$ such that $\sum_{j=1}^n y^\delta_j = \delta$. Set $y = x^{\down} + y^\delta$. It is easy to see that by construction $x^{\down} \preceq y \preceq x^{\upp}$ and $y \in \CC$. The former establishes $\norm{y-x} \le \eps$ due to the reasoning in~\eqref{eq:yx}.  Thus, $\CC$ is an $\eps$-cover for $\Delta_d^n$. As a consequence, to complete the proof we need to enumerate the points in $\CC$. To this end, note that for any point $x^c = d \Bigl( \frac{i_1}{K}, \frac{i_2}{K}, \dots, \frac{i_{n}}{K} \Bigr) \in \CC$, we have
	\begin{align*}
		\sum_{s=1}^n K x_s^c = d \sum_{s=1}^n i_s = d \cdot K.
	\end{align*}
	Since each $i_s$ is a nonnegative integer, using the above inequality, the number of points in $\CC$ is the number of ways $K$ identical objects can be put into $n$ distinct bins. This number is given as~\eqref{eq:choose}.
\end{proof}
Using the above result, we next derive an upper bound on the $\eps$-covering number the set $\HH$. 
\begin{lemma}\longthmtitle{Covering number of $\HH$}\label{le:cover-H}
	The $\eps$-covering number of the set of feasible flows $\HH$ given in~\eqref{eq:H} is bounded above by
	\begin{align}\label{eq:H-covering-no}
		\prod_{w \in \WW} \Bigl( \begin{matrix} \abs{\PP_w} + K_w - 1 \\ K_w - 1 \end{matrix} \Bigr), 
	\end{align}	
	where $\prod$ denotes the product and $K_w = \ceil*{\frac{\abs{\WW} \sqrt{\PP_w} d_w}{\eps}}$ for all $w \in \WW$. 
\end{lemma}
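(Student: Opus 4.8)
The plan is to recognize $\HH$ as a Cartesian product of simplices and then build a cover as a product of covers of the individual factors, invoking Lemma~\ref{le:cover} on each. Since the constraints in~\eqref{eq:H} couple only the coordinates $\{h_p\}_{p \in \PP_w}$ within a single OD pair $w$, and the path sets $\{\PP_w\}_{w \in \WW}$ partition $\PP$, the set $\HH$ decomposes as $\prod_{w \in \WW} \Delta_{d_w}^{\abs{\PP_w}}$, where $\Delta_{d_w}^{\abs{\PP_w}} := \setdef{y \in \realnonnegative^{\abs{\PP_w}}}{\sum_{p \in \PP_w} y_p = d_w}$ lives on the coordinate block indexed by $\PP_w$.

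First I would fix a per-factor tolerance $\eps_w := \eps/\abs{\WW}$ and apply Lemma~\ref{le:cover} to each simplex $\Delta_{d_w}^{\abs{\PP_w}}$, obtaining an $\eps_w$-cover $\CC_w$ whose cardinality is at most the binomial in~\eqref{eq:choose} with $K_w = \ceil*{\frac{\sqrt{\abs{\PP_w}}\, d_w}{\eps_w}} = \ceil*{\frac{\abs{\WW} \sqrt{\abs{\PP_w}}\, d_w}{\eps}}$, which matches the value stated in the lemma (reading $\sqrt{\PP_w}$ as $\sqrt{\abs{\PP_w}}$).

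Next I would form the product set $\CC := \prod_{w \in \WW} \CC_w$, viewed as a subset of $\HH$ by concatenating the blocks, and argue that it is a valid $\eps$-cover. Given any $h \in \HH$ with blocks $h^{(w)} \in \Delta_{d_w}^{\abs{\PP_w}}$, I would pick for each $w$ a point $c^{(w)} \in \CC_w$ with $\norm{h^{(w)} - c^{(w)}} \le \eps_w$ and concatenate these into $h^c \in \CC$. Because the blocks occupy disjoint coordinates, the squared Euclidean distance splits as $\norm{h - h^c}^2 = \sum_{w \in \WW} \norm{h^{(w)} - c^{(w)}}^2 \le \sum_{w \in \WW} \eps_w^2 = \abs{\WW} \cdot \eps^2/\abs{\WW}^2 = \eps^2/\abs{\WW} \le \eps^2$, so $\norm{h - h^c} \le \eps$. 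Counting then gives $\abs{\CC} = \prod_{w \in \WW} \abs{\CC_w}$, which is bounded by the product~\eqref{eq:H-covering-no}, completing the argument.

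There is no genuine obstacle here—the work is essentially bookkeeping—but the one point that deserves care is the allocation of the tolerance across factors. The choice $\eps_w = \eps/\abs{\WW}$ is deliberately crude (orthogonality of the blocks would in fact permit the sharper $\eps_w = \eps/\sqrt{\abs{\WW}}$), yet it yields the clean closed form for $K_w$ quoted in the statement. One must also confirm that the product cover lies inside $\HH$: this holds because each $\CC_w \subset \Delta_{d_w}^{\abs{\PP_w}}$ by the construction in Lemma~\ref{le:cover}, so the concatenated points satisfy every demand constraint in~\eqref{eq:H}.
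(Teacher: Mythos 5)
Your proposal is correct and follows essentially the same route as the paper: decompose $\HH$ as a product of per-OD-pair simplices, cover each with radius $\eps/\abs{\WW}$ via Lemma~\ref{le:cover}, and take the product cover. The only cosmetic difference is that you combine the block errors via the Pythagorean identity where the paper uses the triangle inequality; both give $\norm{h-h^c}\le\eps$ and the same count.
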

\begin{proof}
	First note that $\HH = \prod_{w \in \WW} \HH_w$, where $\prod$ represents the Cartesian product and 
	\begin{align*}
		\HH_w := \Bigl\{ h^w \in \realnonnegative^{\abs{\PP_w}} \, \Big| \, \sum_{p \in \PP_w} h^w_p = d_w \Bigr\}
	\end{align*}
	for all $w \in \WW$. That is, $\HH_w$ represents the set of feasible flows for paths corresponding to the OD pair $w$. From Lemma~\ref{le:cover}, the number of points required to cover the set $\HH_w$ with balls of radius $\frac{\eps}{\abs{\WW}}$ is 
	\begin{align*}
		\Bigl( \begin{matrix} \abs{\PP_w} + K_w - 1 \\ K_w - 1 \end{matrix} \Bigr),
	\end{align*}
	where $K_w =  \ceil*{\frac{\abs{\WW} \sqrt{\PP_w} d_w}{\eps}}$. Consider these set of points to be represented by $\CC_w \subset \HH_w$. Now consider the set of points $\CC = \setdef{(h^w)_{w \in \WW}}{h^w \in \CC_w \text{ for all } w \in \WW}$. The number of points in $\CC$ is equal to the value in~\eqref{eq:H-covering-no}. We show next that $\CC$ is an $\eps$-cover for $\HH$. Pick any $h = (h^w)_{w \in \WW} \in \HH$, we have
	\begin{align*}
		\min_{\bar{h} \in \CC} \norm{h - \bar{h}} \le \sum_{w \in \WW} \min_{w \in \CC_w} \norm{h^w - \bar{h}^w} \le \sum_{w \in \WW} \frac{\eps}{\abs{\WW}} = \eps,
	\end{align*}
	where the first condition follows from the triangle inequality and the second from the definition of $\CC_w$. This completes the proof. 
\end{proof}

\bibliographystyle{acm}

\end{document}